\newtheorem{theorem}{Theorem}
\newtheorem{lemma}{Lemma}
\newtheorem{corollary}{Corollary}
\newtheorem{proposition}{Proposition}
\theoremstyle{definition}
\newtheorem{example}{Example}
\theoremstyle{remark}
\newtheorem{remark}{Remark}
\numberwithin{equation}{section}
\newcommand{\calH}{\mathcal{H}}
\newcommand{\calI}{\mathcal{I}}
\newcommand{\calJ}{\mathcal{J}}
\newcommand{\calO}{\mathcal{O}}
\newcommand{\calR}{\mathcal{R}}
\newcommand{\calX}{\mathcal{X}}
\newcommand{\bbk}{\mathbf{k}}
\newcommand{\bbC}{\mathbb{C}}
\newcommand{\bbP}{\mathbb{P}}
\newcommand{\bbQ}{\mathbb{Q}}
\newcommand{\bbZ}{\mathbb{Z}}
\newcommand{\bfe}{\mathbf{e}}
\newcommand{\bff}{\mathbf{f}}
\newcommand{\sfD}{\mathsf{D}}
\newcommand{\balpha}{\boldsymbol{\alpha}}
\newcommand{\bmu}{\boldsymbol{\mu}}
\newcommand{\half}{\frac{1}{2}}
\newcommand{\Aut}{\textup{Aut}}
\newcommand{\Pic}{\textup{Pic}}
\newcommand{\Spec}{\textup{Spec}}
\newcommand{\et}{\acute{e}t}
\newcommand{\Lef}{\textup{Lef}}
\newcommand{\Num}{\textup{Num}}
\newcommand{\Or}{\textup{O}}
\newcommand{\cha}{\textup{char}}
\newcommand{\nt}{\textup{nt}}
\newcommand{\ct}{\textup{ct}}
\newcommand{\NS}{\textup{NS}}
\newcommand{\red}{\textup{red}}
\newcommand{\Tr}{\textup{Tr}}
\renewcommand\emptyset\varnothing
\newcommand{\beq}{\begin{equation}}
\newcommand{\eeq}{\end{equation}}
\begin{document}
\title[Automorphisms of   Enriques surfaces]{Numerical trivial automorphisms of   Enriques surfaces in arbitrary characteristic}

\author{Igor V. Dolgachev}
\address{Department of Mathematics, University of Michigan, 525 E. University Av., Ann Arbor, Mi, 49109, USA}
\email{idolga@umich.edu}

\dedicatory{To the memory of Torsten Ekedahl}

\begin{abstract} We extend to arbitrary characteristic some known results on automorphisms of complex Enriques surfaces that  act identically 
on the cohomology or the cohomology modulo torsion. 
\end{abstract}

\maketitle

\section{Introduction} 
Let $S$ be algebraic surface over an algebraically closed field $\Bbbk$ of characteristic $p\ge 0$.  An automorphism $\sigma$ of $S$ is called \emph{numerically trivial} (resp, \emph{cohomologically trivial}) if it acts trivially on $H_{\et}^2(S,\bbQ_\ell)$ (resp. $H_{\et}^2(S,\bbZ_\ell)$).   In the case when $S$ is an Enriques surface, the Chern class homomorphism $c_1:\Pic(S) \to H_{\et}^2(S,\bbZ_\ell)$ induces an isomorphism $\NS(S)\otimes \bbZ_\ell \cong  H_{\et}^2(S,\bbZ_\ell)$, where $\NS(S)$ is the N\'eron-Severi group of $S$ isomorphic to the Picard group $\Pic(S)$. Moreover, it is known that the torsion subgroup of $\NS(S)$ is generated by the canonical class $K_S$. Thus, an automorphism $\sigma$  is cohomologically (resp. numerically) trivial if and only if it acts identically on $\Pic(S)$ (resp. $\Num(S) = \Pic(S)/(K_S)$). Over the field of complex numbers, the classification of numerically trivial automorphisms can be found in \cite{MN}, \cite{Mukai}. We have 

\begin{theorem}\label{thmmukai} Assume $\Bbbk = \bbC$. The group  $\Aut(S)_{\ct}$ of cohomologically  trivial automorphisms is  cyclic of order $\le 2$. The group $\Aut(S)_{\nt}$ of numerically trivial automorphisms is  cyclic of order $2$ or $4$. 
\end{theorem}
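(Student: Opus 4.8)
The plan is to pass to the canonical K3 double cover and translate the statement into one about Hodge isometries of its transcendental lattice. Let $\pi\colon X\to S$ be the K3-cover with fixed-point-free covering involution $\iota$, so that $\iota^*\omega=-\omega$ where $\omega$ spans $H^0(X,\Omega_X^2)$. The invariant and anti-invariant sublattices $H^\pm\subseteq H^2(X,\bbZ)$ satisfy $H^+\cong U(2)\oplus E_8(-2)$ and $H^-\cong U\oplus U(2)\oplus E_8(-2)$, with $\omega\in H^-\otimes\bbC$, and $\pi^*$ identifies $\Num(S)\otimes\bbQ$ with $H^+\otimes\bbQ$. Each $\sigma\in\Aut(S)$ lifts to exactly two automorphisms $\tilde\sigma,\iota\tilde\sigma$ of $X$, and the set of all lifts of elements of $\Aut(S)_{\nt}$ forms a group $\tilde G\subseteq\Aut(X)$ sitting in a central extension $1\to\langle\iota\rangle\to\tilde G\to\Aut(S)_{\nt}\to 1$. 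Numerical triviality of $\sigma$ means $\tilde\sigma$ acts trivially on $H^+$, so all of $\tilde G$ fixes $H^+$ pointwise.

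First I would establish finiteness and cyclicity. As $S$ is projective, an ample class pulls back to an ample class $\pi^*A\in H^+$ fixed by $\tilde G$; since the automorphisms of a projective K3 surface fixing an ample class form a finite group, $\tilde G$ is finite. By the Torelli theorem $\tilde G$ acts faithfully on $H^2(X,\bbZ)$, hence, being trivial on $H^+$, faithfully on $H^-$, preserving the Hodge structure and the line $\bbC\omega$; this gives a character $\chi\colon\tilde G\to\bbC^\times$ with $g^*\omega=\chi(g)\omega$ and $\chi(\iota)=-1$. Restricting each Hodge isometry to the transcendental lattice $T_X$ and using that $\mathrm{End}_{\mathrm{Hdg}}(T_X\otimes\bbQ)$ is a field acting on $\bbC\omega$ through a single embedding, the scalar $\chi(g)$ determines $g|_{T_X}$; after checking that $\tilde G$ also acts trivially on any $\iota$-anti-invariant algebraic classes, $\chi$ is injective. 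A finite subgroup of $\bbC^\times$ is cyclic, so $\tilde G$, and therefore $\Aut(S)_{\nt}=\tilde G/\langle\iota\rangle$, is cyclic.

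Next I would bound the order. Since $\tilde G$ fixes the rank-$10$ lattice $H^+$, a generator $g$ has characteristic polynomial $(t-1)^{10}$ times cyclotomic factors coming from $H^-$ (rank $12$), so if $\chi(g)$ is a primitive $m$-th root of unity then $\varphi(m)\le\rank T_X\le 12$. To cut this crude list down to $2$-power roots of unity I would combine the holomorphic Lefschetz formula on $X$, $\sum_i(-1)^i\Tr(g^*\mid H^i(X,\calO_X))=1+\chi(g)^{-1}$, with the topological one, $e(\Fix(g))=\sum_i(-1)^i\Tr(g^*\mid H^i(X,\bbQ))$, and with the constraint $\chi(\iota)=-1$ forcing $-1\in\mathrm{im}\,\chi$; the resulting restrictions on the (nonempty, since the topological Lefschetz number of $\sigma$ on $S$ equals $e(S)=12$) fixed loci force $\chi(g)\in\mu_8$, whence $|\tilde G|$ divides $8$ and $\Aut(S)_{\nt}$ is cyclic of order dividing $4$.

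Finally I would separate the two groups. Every automorphism fixes $K_S$, so $\sigma\mapsto(\bar D\mapsto\sigma^*D-D)$ is a well-defined homomorphism $\Aut(S)_{\nt}\to\Hom(\Num(S),\bbZ/2)$ whose kernel is exactly $\Aut(S)_{\ct}$. A cohomologically trivial $\sigma$ admits a lift with $\chi(\tilde\sigma)=1$, so by the rigidity above $\tilde\sigma\in\{\id,\iota\}$ and $\sigma$ has order $\le 2$, while an order-$4$ generator has $\chi$ a primitive $4$-th root of unity and is numerically but not cohomologically trivial. This yields that $\Aut(S)_{\ct}$ is cyclic of order $\le 2$ and $\Aut(S)_{\nt}$ is cyclic of order $2$ or $4$. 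The crux is the order bound: the rank estimate alone admits many roots of unity, and eliminating all but the small $2$-power ones rests on the delicate interplay of the two Lefschetz formulas with the precise lattices $H^\pm$, compounded by non-generic surfaces where $T_X\subsetneq H^-$ and $g$ might a priori act nontrivially on anti-invariant algebraic classes.
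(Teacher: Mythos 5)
Your route is the transcendental one of Mukai--Namikawa: pass to the K3 cover, use Torelli and Hodge isometries of $T_X$, and read everything off a character $\chi$ on $\omega$. This is precisely the approach the paper deliberately avoids (it does not extend to positive characteristic); the paper instead works on $S$ itself, using canonical isotropic sequences to produce an $\Aut_{\ct}(S)$-invariant genus-one fibration with two or three invariant degree-two points on its generic fiber (bounding $\Aut_{\ct}(S)$ by the automorphism group of an elliptic curve), and then Lefschetz fixed-point computations and fiber analysis to bound $\Aut_{\nt}(S)/\Aut_{\ct}(S)$. A different route would be welcome, but yours breaks at its central step.

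The injectivity of $\chi$ --- the step you defer as ``after checking that $\tilde G$ also acts trivially on any $\iota$-anti-invariant algebraic classes'' --- is not a fillable gap: it is \emph{false} in exactly the situations the theorem is about. Suppose $\sigma\in\Aut_{\nt}(S)$ is a nontrivial involution (such surfaces exist: Lieberman's example, Example 1 of the paper). Its topological Lefschetz number is $12$, so $\Fix(\sigma)\neq\emptyset$; a fixed point of $\sigma$ lifts to a point fixed by one of the two lifts $g$, and since $g^2\in\{\id,\iota\}$ has that fixed point while $\iota$ is free, $g^2=\id$. Thus $g$ is an involution with $g^*\omega=\pm\omega$, and replacing $g$ by $\iota g$ if necessary you obtain a \emph{nontrivial symplectic} element of $\tilde G$ with $\chi(g)=1$. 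By the faithfulness on $H^2(X,\bbZ)$ that you yourself invoke, $g$ acts nontrivially on $H^2$; being trivial on $H^+$ and (by Nikulin's lemma) on $T_X$, it must act nontrivially on $\NS(X)\cap H^-$ --- indeed its co-invariant lattice is $E_8(-2)$ sitting inside the anti-invariant algebraic classes. So $\ker\chi\neq\{\id\}$, $\tilde G$ contains $(\bbZ/2\bbZ)^2$, and your cyclicity argument (finite subgroup of $\bbC^\times$), the order bound, and the final dichotomy all collapse. The same inconsistency surfaces at the end: your claim that a cohomologically trivial $\sigma$ has a lift with $\chi=1$, combined with your rigidity, gives $\tilde\sigma\in\{\id,\iota\}$, i.e.\ $\sigma=\id$, so your argument would prove $\Aut_{\ct}(S)=\{1\}$, contradicting the known Enriques surfaces with a nontrivial cohomologically trivial involution (Dolgachev, Horikawa; Example 2 of the paper). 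Any correct proof along Mukai--Namikawa lines must confront the action of the lifts on $\NS(X)\cap H^-$ head-on; it cannot be reduced to a character computation on $\omega$.
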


The tools in the loc.cit. are transcendental and use the periods of the K3-covers of Enriques surfaces, so they do not extend to the case of positive characteristic.  
 
 Our main result is that Theorem \ref{thmmukai} is true in any characteristic. 
 
The author is grateful to S. Kond\={o}, J. Keum  and the referee for useful comments to the paper.    

\section{Generalities}
Recall that an Enriques surface $S$ is called \emph{classical} if $K_S\ne 0$. The opposite may happen only if $\cha(\Bbbk) = 2$. Enriques surfaces with this property are divided into two classes: $\bmu_2$-surfaces or $\balpha_2$-surfaces. They are distinguished by the property of the action of the Frobenius on $H^2(S,\calO_S)\cong \Bbbk$.  In the first case, the action is non-trivial, and in the second case it is trivial. They also 
differ by the structure of their Picard schemes. In the first case it is isomorphic to the group scheme $\bmu_2$, in the second case it is isomorphic to the group scheme $\balpha_2$. Obviously, if 
$S$ is not classical, then 
$\Aut(S)_{\nt} = \Aut(S)_{\ct}$.

It is known that the quadratic lattice  $\Num(S)$ of numerical equivalence divisor classes on $S$ is isomorphic to $ \Pic(S)/(K_S)$. It  is a unimodular even quadratic lattice of rank 10 and signature $(1,9)$. As such it must  be isomorphic to  the orthogonal sum $E_{10} = E_8\oplus U$, where $E_8$ is the unique negative definite  even unimodular lattice of rank 8 and $U$ is a hyperbolic plane over $\bbZ$. One can realize $E_{10}$ as a primitive  sublattice of the standard unimodular  odd hyperbolic lattice 
\beq\label{1.1}
\bbZ^{1,10} = \bbZ\bfe_0+\bbZ\bfe_1+\cdots +\bbZ\bfe_{10},
\eeq
where $\bfe_0^2 = 1, \bfe_i^2 = -1, i > 0, \bfe_i\cdot \bfe_j = 0,\  i\ne j$. The orthogonal complement of the 
vector 
$$\bbk_{10}  = -3\bfe_0+\bfe_1+\cdots+\bfe_{10}$$
is isomorphic to the lattice $E_{10}$. 

Let 
$$\bff_j = -\bbk_{10}+\bfe_j,\  j = 1,\ldots, 10.$$
The 10 vectors $\bff_j$ satisfy 
$$\bff_j^2 = 0, \quad \bff_i\cdot \bff_j = 1, \ i\ne j. $$
Under an isomorphism $E_{10}\to \Num(S)$, their images form a sequence $(f_1,\ldots,f_{10})$ of isotropic vectors  satisfying  $f_i\cdot f_j = 1, i\ne j$, called an isotropic sequence in \cite{CD}.  An isotropic sequence  generates an index 3  sublattice of $\Num(S)$. 

A smooth rational curve $R$ on $S$ (a \emph{$(-2)$-curve}, for brevity) does not move in a linear system and $|R+K_S| = \emptyset$ if $K_S\ne 0$. Thus we can and will identify $R$ with its class $[R]$ in $\Num(S)$. Any $(-2)$-curve defines a reflection isometry of $\Num(S)$ 
$$s_R:x\mapsto x+(x\cdot R)R.$$ 

Any numerical divisor class  in $\Num(S)$ of non-negative norm represented by an effective divisor can be transformed by a sequence of reflections $s_R$ into the numerical divisor class of a nef divisor. Any isotropic sequence can be transformed by a sequence of reflections into a \emph{canonical} isotropic sequence, i.e. an isotropic sequence $(f_1,\ldots,f_{10})$  satisfying the  following properties
\begin{itemize} 
\item $f_{k_1},\ldots,f_{k_c}$ are nef classes for some $1 = k_1 < k_2< \ldots < k_c \le 10$;
\item $f_{j} = f_{k_i}+\calR_{j}, \ k_i<j < k_{i+1},$ where $\calR_j= R_{i,1}+\cdots+R_{i,s_j}$ is the sum of $s_j = j-k_i$ classes of $(-2)$-curves with intersection graph
of type $A_{s_j}$ such that $f_{k_i}\cdot \calR_j = f_{k_i}\cdot R_{i,1} = 1$ 

\xy (-50,10)*{};(-30,-5)*{};
@={(-10,0),(0,0),(10,0),(30,0),(40,0)}@@{*{\bullet}};
(-10,0)*{};(15,0)*{}**\dir{-};(25,0)*{};(40,0)*{}**\dir{-};
(20,0)*{\ldots};(0,3)*{R_{i,1}};(10,3)*{R_{i,2}};(30,3)*{R_{i,s_j-1}};(43,3)*{R_{i,s_j}};(-10,3)*{f_{k_i}}.
\endxy 
\end{itemize}

Any primitive isotropic numerical nef divisor class $f$ in $\Num(S)$ is the class of  nef effective divisors $F$ and  $F' \sim F+K_S$. The linear system $|2F| = |2F'|$ is  base-point-free and defines a  fibration $\phi:S\to \bbP^1$ whose generic fiber $S_\eta$ is a regular curve of arithmetic genus one. If $p\ne 2$, $S_\eta$ is a smooth elliptic curve over the residue field of the generic point $\eta$ of the base. In this case, 
$\phi$ is called an elliptic fibration. The divisors $F$ and $F'$ are \emph{half-fibers} of $\phi$, i.e. $2F$ and $2F'$ are fibers of $\phi$.

The following result by J.-P. Serre \cite{Serre} about lifting to characteristic 0  shows that there is nothing new if $p\ne 2$.

\begin{theorem} Let $W(\Bbbk)$ be the ring of Witt vectors with algebraically closed residue field $\Bbbk$, and  let $X$ be a 
smooth projective variety over $\Bbbk$, and let $G$ be a finite automorphism group of $X$. Assume 
\begin{itemize}
\item  $\#G$ is prime to $\cha(\Bbbk)$;
\item $H^2(X,\calO_X) = 0$;
\item $H^2(X,\Theta_X) = 0$, where $\Theta_X$ is the tangent sheaf of $X$.
\end{itemize}
Then the pair $(X,G)$ can be lifted to  $W(\Bbbk)$, i.e. there exists a smooth projective scheme 
$\calX\to \Spec~W(\Bbbk)$ with special fiber isomorphic to $X$ and an action of $G$ on $\calX$ over $W(\Bbbk)$ such 
that the induced action of $G$ in $X$ coincides with the action of $G$ on $X$.
\end{theorem}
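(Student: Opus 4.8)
The plan is to lift the pair $(X,G)$ first \emph{formally}, step by step over the Artinian truncations $R_n = W(\Bbbk)/p^{n+1}W(\Bbbk)$ (so $R_0 = \Bbbk$), and then to \emph{algebraize} the resulting formal scheme by producing a lift of an ample line bundle and invoking Grothendieck's formal existence theorem. Each transition $R_n \to R_{n-1}$ is a square-zero extension whose kernel $p^nW(\Bbbk)/p^{n+1}W(\Bbbk)$ is isomorphic to $\Bbbk$, so the standard obstruction theory applies verbatim at every stage.

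For the formal lifting I would argue by induction on $n$. Suppose a smooth lift $X_{n-1}\to \Spec R_{n-1}$ of $X$ together with a $G$-action lifting the given one has been constructed. Since $H^2(X,\Theta_X)=0$, the obstruction to lifting $X_{n-1}$ to a smooth $X_n\to\Spec R_n$ vanishes, so some lift $X_n$ exists. To make it $G$-equivariant, observe that for each $g\in G$ the transport ${}^gX_n$ of $X_n$ through the automorphism of $X_{n-1}$ induced by $g$ is again a lift of $X_{n-1}$; the difference $c(g):=[{}^gX_n]-[X_n]$ lives in the torsor group $H^1(X,\Theta_X)$ (tensored with the kernel $\cong\Bbbk$) and defines a $1$-cocycle of $G$ with values in that module. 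Because $\#G$ is prime to $p=\cha(\Bbbk)$, it is invertible in $R_n$, so $H^i(G,M)=0$ for $i>0$ on every $R_n$-module $M$. Hence $c$ is a coboundary, and after translating $X_n$ by the corresponding element of $H^1(X,\Theta_X)$ we may assume ${}^gX_n\cong X_n$ for all $g$; that is, each $g$ lifts to an isomorphism $\tilde g_n$ of $X_n$. The discrepancy $\tilde g_n\tilde h_n\,(\widetilde{gh}_n)^{-1}$ is an infinitesimal automorphism, an element of $H^0(X,\Theta_X)\otimes\Bbbk$, and gives a $2$-cocycle whose class in $H^2(G,H^0(X,\Theta_X)\otimes\Bbbk)=0$ vanishes, so the $\tilde g_n$ can be adjusted to compose correctly and furnish a genuine $G$-action on $X_n$. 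Packaged conceptually, $G$-equivariant deformations are governed by equivariant groups $\bbH^i_G(X,\Theta_X)$, and the spectral sequence $H^p(G,H^q(X,\Theta_X))\Rightarrow \bbH^{p+q}_G(X,\Theta_X)$ collapses to $\bbH^i_G\cong H^i(X,\Theta_X)^G$; the obstruction space $\bbH^2_G\cong H^2(X,\Theta_X)^G=0$, which is the reason the induction never stalls.

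Taking the inverse limit yields a formal scheme $\widehat{\calX}\to\textup{Spf}\,W(\Bbbk)$ carrying a $G$-action and restricting to $X$ over $\Bbbk$. To algebraize it I would lift an ample line bundle. Since $H^2(X,\calO_X)=0$ kills the obstruction to lifting line bundles along each square-zero step, an ample $L$ on $X$ lifts to a compatible system, hence to a formal line bundle $\widehat L$ on $\widehat{\calX}$; replacing $L$ by $\bigotimes_{g\in G}g^*L$ (still ample) and averaging makes the system $G$-invariant and $G$-linearized, again using invertibility of $\#G$. By Grothendieck's existence theorem (formal GAGA) the pair $(\widehat{\calX},\widehat L)$ is the formal completion of a genuine projective scheme $\calX\to\Spec W(\Bbbk)$ with special fiber $X$. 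Finally the $G$-action algebraizes as well: each $g$ is an automorphism of the formal scheme commuting with the polarization, and since formal GAGA is an equivalence of categories on coherent modules and proper morphisms are recovered from their (coherent) graphs, these formal automorphisms descend uniquely and functorially to automorphisms of $\calX$, producing the required $G$-action over $W(\Bbbk)$.

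The main obstacle is precisely the coupling between the deformation theory and the group action: lifting the bare scheme needs only $H^2(X,\Theta_X)=0$, but propagating the $G$-action through every infinitesimal step—and then linearizing the ample bundle—forces the vanishing of the positive-degree group cohomology $H^{\ge 1}(G,-)$, available exactly because $\#G$ is prime to $p$. If $p$ divided $\#G$ these cocycles could represent genuine obstructions and the statement would fail, so the arithmetic hypothesis on $\#G$ is the crux rather than a convenience. A secondary point to verify with care is the functoriality of algebraization for the automorphisms, which is where formal GAGA for proper schemes (not merely for the structure sheaf) is used.
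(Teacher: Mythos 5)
Your proof is correct. Note that the paper itself does not prove this statement: it is quoted as Serre's lifting theorem with a citation to \cite{Serre}, and your argument --- inductive equivariant deformation over the truncations $W(\Bbbk)/p^{n+1}W(\Bbbk)$, with tameness of $G$ killing the group-cohomology obstructions $H^1(G,H^1(X,\Theta_X))$ and $H^2(G,H^0(X,\Theta_X))$ so that only $H^2(X,\Theta_X)^G=0$ matters, followed by lifting an ample class via $H^2(X,\calO_X)=0$ and algebraizing the formal scheme and the $G$-action by Grothendieck's existence theorem --- is essentially the proof given in that reference. So your proposal matches the intended argument, and I see no gap in it.
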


We apply this theorem to the case when $G = \Aut_{\nt}(S)$, where $S$ is an Enriques surface over a field $\Bbbk$ 
of characteristic  $p \ne 2$. We will see later that  the order of $G = \Aut_{\nt}(S)$ is a power of 2, so it is prime to $p$.
 We have an isomorphism 
$H^2(S,\Theta_S) \cong H^0(S,\Omega_S^1(K_S))$. Let $\pi:X\to S$ be the K3-cover. Since  the  map
$\pi^*:H^0(S,\Omega_S^1(K_S))\to H^0(X,\Omega_X^1) \cong H^0(X,\Theta_X)$ 
is injective and $H^0(X,\Theta_X) = 0$, we obtain that all conditions in Serre's Theorem are satisfied. Thus, 
there is nothing new in this case. We can apply the results of \cite{MN} and \cite{Mukai} to obtain the complete 
classification of numerically trivial automorphisms. However, we will give here another, purely geometric,  proof of Theorem \ref{thmmukai} that does not appeal to K3-covers nor does it uses Serre's  lifting theorem.

\section{Lefschetz fixed-point formula}
We will need a Lefschetz fixed-point formula comparing the trace of an automorphism $\sigma$ of finite order acting on the $l$-adic cohomology 
$H_{\acute{e}t}^*(X,\bbQ_l)$ of a normal projective algebraic surface $X$ with the structure of the subscheme $X^\sigma$ of fixed points of $\sigma$. 

The subscheme of fixed points $X^\sigma$ is defined as the scheme-theoretical intersection of the diagonal with the graph of $\sigma$. Let $\calJ(\sigma)$ be the ideal sheaf of $X^\sigma$. If $x\in X^\sigma$, then the stalk $\calJ(\sigma)_x$ is the ideal in $\calO_{X,x}$ generated by elements $a-\sigma^*(a), a\in \calO_{X,x}$. Let $\Tr_i(\sigma)$ denote the trace of the linear action of $\sigma$ on $H_{\acute{e}t}^i(X,\bbQ_l)$. The following formula was proved in \cite{KSS}, Proposition (3.2):
\beq\label{lf1}
\sum (-1)^i\Tr_i(\sigma) = \chi(X,\calO_{X^\sigma})+\chi(X,\calJ(\sigma)/\calJ(\sigma)^2)-\chi(X,\Omega_X^1\otimes \calO_{X^\sigma}).
\eeq

If $\sigma$ is  \emph{tame}, i.e. its order is prime to $p$, then $X^\sigma$ is reduced and smooth \cite{Iversen}, and the Riemann-Roch formula easily implies
\beq\label{lf2}
\Lef(\sigma):= \sum (-1)^i\Tr_i(\sigma) = e(X^\sigma),
\eeq
where $e(X^\sigma)$ is the  Euler characteristic of $X^\sigma$ in \'etale $l$-adic cohomology. This is the familiar Lefschetz fixed-point formula from topology.

The interesting case is when $\sigma$ is \emph{wild}, i.e. its order is divisible by $p$. We will be interested in application of this formula in the case when 
$\sigma$ is of order 2 equal to the characteristic and $X$ is an Enriques surface $S$. 

Let $\pi:S\to Y = S/(\sigma)$ be the quotient map.  Consider an $\calO_Y$-linear map
$$T= 1+\sigma:\pi_*\calO_S \to \calO_Y.$$
Its image is the ideal sheaf $\calI_Z$ of a closed subscheme $Z$ of $Y$ and the inverse image of this ideal in $\calO_S$ is equal to $\calJ(\sigma)$.

\begin{theorem}\label{thm3.1} Let $S$ be a classical Enriques surface and let $\sigma$ be a wild automorphism of $S$ of order 2. 
Then $S^\sigma$ is non-empty and one of the following cases occurs:
\begin{itemize}
\item[(i)] $S^\sigma$ consists of one isolated point with $h^0(\calO_Z) = 1$.
\item[(ii)] $S^\sigma$ consists of  a connected curve with $h^0(\calO_Z)  = 1, h^1(\calO_Z) = 0$.
\end{itemize}  
Moreover, in both cases  $H^1(Y,\calO_Y) = H^2(Y,\calO_Y)  = \{0\}$.
\end{theorem}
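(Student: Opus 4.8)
The plan is to extract everything from the single exact sequence attached to $T = 1+\sigma$, fed by the numerical input $\chi(\calO_S)=1$. First I would record that for a \emph{classical} Enriques surface $K_S\ne 0$ gives $h^2(\calO_S)=h^0(\omega_S)=0$, while always $h^1(\calO_S)=0$; hence $H^1(\calO_S)=H^2(\calO_S)=0$ and $\chi(\calO_S)=1$. (This is the only place classicality enters.) Next I would establish, as a sequence of $\calO_Y$-modules,
\[
0 \to \calO_Y \to \pi_*\calO_S \xrightarrow{\ T\ } \calI_Z \to 0,
\]
using that in characteristic $2$ we have $T(a)=a+\sigma(a)$, so $\ker T=\{a:\sigma(a)=a\}=(\pi_*\calO_S)^\sigma=\calO_Y$ and $\im T=\calI_Z$ by definition of $Z$. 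Since $\pi$ is finite, $R^i\pi_*=0$ for $i>0$, so $H^\bullet(Y,\pi_*\calO_S)=H^\bullet(S,\calO_S)$.

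For non-emptiness I would argue by contradiction. Because $\calJ(\sigma)=\calI_Z\calO_S$, the fixed scheme is $S^\sigma=\pi^{-1}(Z)$ scheme-theoretically; as $\pi$ is surjective, $S^\sigma=\emptyset$ is equivalent to $Z=\emptyset$, i.e. $\calI_Z=\calO_Y$. But then the displayed sequence gives $\chi(\calO_S)=2\chi(\calO_Y)$, which is even, contradicting $\chi(\calO_S)=1$. Hence $Z\ne\emptyset$ and $S^\sigma\ne\emptyset$. (One could alternatively invoke the Lefschetz formula, but the parity of $\chi(\calO_S)$ is cleaner and needs no trace computation.)

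The cohomological heart is a short diagram chase. Taking the long exact sequence of the displayed sequence and using $H^1(\calO_S)=H^2(\calO_S)=0$ together with the isomorphism $H^0(\calO_Y)\to H^0(\pi_*\calO_S)$ (both $=\Bbbk$), I would get $H^1(\calO_Y)\cong H^0(\calI_Z)$ and $H^2(\calO_Y)\cong H^1(\calI_Z)$. Since $Z\ne\emptyset$, a global function on the connected projective $Y$ vanishing on $Z$ is zero, so $H^0(\calI_Z)=0$ and hence $H^1(\calO_Y)=0$. The sequence $0\to\calI_Z\to\calO_Y\to\calO_Z\to 0$ then gives $0\to\Bbbk\to H^0(\calO_Z)\to H^1(\calI_Z)\to 0$, so $h^2(\calO_Y)=h^1(\calI_Z)=h^0(\calO_Z)-1$ and $\chi(\calO_Y)=h^0(\calO_Z)$. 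Plugging this into the Euler-characteristic identity $1=\chi(\calO_S)=\chi(\calO_Y)+\chi(\calI_Z)=2\chi(\calO_Y)-\chi(\calO_Z)$ (noting $\dim Z\le 1$, so $\chi(\calO_Z)=h^0(\calO_Z)-h^1(\calO_Z)$) collapses to the slick relation $h^0(\calO_Z)+h^1(\calO_Z)=1$. With $h^0(\calO_Z)\ge 1$ and $h^1(\calO_Z)\ge 0$ this forces $h^0(\calO_Z)=1$ and $h^1(\calO_Z)=0$, whence $H^1(\calO_Y)=H^2(\calO_Y)=0$.

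Finally I would read off the dichotomy: $h^0(\calO_Z)=1$ forces $Z$ connected, and since $\pi$ carries $S^\sigma=\pi^{-1}(Z)$ bijectively and homeomorphically onto $Z$, the fixed locus $S^\sigma$ is connected and nonempty. Because $Z$ avoids the étale locus (where $T$ is already surjective, e.g. $T(t)=1$ on an Artin--Schreier chart), $Z$ is a proper closed subscheme, so $\dim Z=\dim S^\sigma\le 1$. The two cases are then exactly $\dim Z=0$, a single point (case (i)), and $\dim Z=1$, a connected curve with $h^1(\calO_Z)=0$ (case (ii)). I expect the main obstacle to be not the numerics, which are immediate once the relation $h^0+h^1=1$ is in hand, but the geometric bookkeeping: verifying $\ker T=\calO_Y$ and $H^\bullet(\pi_*\calO_S)=H^\bullet(\calO_S)$, and correctly transferring support, dimension, and connectedness between the downstairs scheme $Z\subset Y$ and the upstairs fixed scheme $S^\sigma$ via $\calJ(\sigma)=\calI_Z\calO_S$. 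It is worth noting that the argument uses neither the formula \eqref{lf1} nor any assumption that $\sigma$ act trivially on $H^2_{\et}$, so it covers every wild involution of a classical Enriques surface.
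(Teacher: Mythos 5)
Your proof is correct, but it takes a genuinely different and more economical route than the paper's. The paper proves non-emptiness of $S^\sigma$ via the Woods Hole fixed-point formula \cite{SGA5}, and then splits into two cases: for isolated fixed points it uses $\omega_Y\cong(\pi_*\omega_S)^G$ (Lemma 2.1 of \cite{DK}) together with Serre duality to kill $H^1(Y,\calO_Y)$ and $H^2(Y,\calO_Y)$, while for a one-dimensional fixed locus it invokes Proposition 1.3 and the group-cohomology exact sequences (2.1), (2.3) of \cite{DK} (via the sheaves $\calH^i(G,\calO_S)$), and only then feeds the Euler-characteristic relation \eqref{odd}. You instead extract everything --- non-emptiness (by parity, since $\chi(\calO_S)=1$ is odd), the vanishing of $H^1(\calO_Y)$ and $H^2(\calO_Y)$, and $h^0(\calO_Z)=1$, $h^1(\calO_Z)=0$ --- from the long exact cohomology sequence of \eqref{ex3} combined with $0\to\calI_Z\to\calO_Y\to\calO_Z\to 0$. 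The paper only ever takes Euler characteristics of \eqref{ex3} and never runs its long exact sequence; that is precisely why it needs the external duality and group-cohomology input, and why it needs the case division, which in your argument appears only at the very end as the trivial dichotomy $\dim Z=0$ or $1$. What the paper's heavier route buys is generality: the identities $\ker(1+\sigma)=(\pi_*\calO_S)^\sigma=\calO_Y$ and the parity trick are special to the case where the characteristic, the order of $\sigma$, and the degree of $\pi$ all equal $2$, whereas the arguments of \cite{DK}, \cite{Keum} handle wild actions of any prime order. (Your chase can even be shortened: the first sequence also gives $H^2(\calI_Z)=0$, and then the second sequence yields $H^1(\calO_Z)=0$, $H^2(\calO_Y)=0$ and $h^0(\calO_Z)=1$ directly, with no Euler characteristics at all.)

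One small correction: your parenthetical claim that $h^1(\calO_S)=0$ holds ``always'' is false in characteristic $2$ --- the non-classical ($\bmu_2$- and $\balpha_2$-) Enriques surfaces have $h^1(\calO_S)=h^2(\calO_S)=1$. This is harmless here because the theorem assumes $S$ classical, but the clean bookkeeping is: $\chi(\calO_S)=1$ and $h^0(\calO_S)=1$ give $h^1(\calO_S)=h^2(\calO_S)$, and $K_S\ne 0$ gives $h^2(\calO_S)=h^0(S,\omega_S)=0$; so classicality is responsible for both vanishings, not just the second.
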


\begin{proof} As was first observed by J.-P. Serre, the  first assertion follows from the Woods Hole Lefschetz fixed-point formula for cohomology with coefficients in a coherent sheaf \cite{SGA5} (we use that $\sum (-1)^i\Tr(g|H^i(S,\calO_S)) = 1$ and hence the right-hand side of the formula is not zero). The second assertion can be proved for a wild automorphism of any prime order by modifying the arguments from \cite{DK}. This was done in \cite{Keum}. Since in the case $p = 2$ the proof can be simplified, let  us reproduce it here.

Let $G$ be the cyclic group $(\sigma)$ generated by $\sigma$. Recall that one defines the cohomology sheaves $\calH^i(G,\calO_S)$ with stalks at closed points $x\in S$ isomorphic to the cohomology group $H^i(G,\calO_{S,x})$. Since $G$ is a cyclic group of order 2, the definition of the cohomology groups shows that  the sheaves $\calH^i(G,\calO_S), i = 1,2,$ coincide with the sheaf $\calO_Z$. It is isomorphic to the image of the trace map $T:\pi_*\calO_S \to \calO_Y$ and we have an exact sequence 
\beq\label{ex3}
0\to \calO_Y\to \pi_*\calO_S \to \calI_Z\to 0.
\eeq
It gives
\beq\label{odd}
2\chi(\calO_Y) = \chi(\calO_S)+\chi(\calO_Z) = 1+\chi(\calO_Z).
\eeq
Assume first that we have only isolated fixed points. By Lemma 2.1 from \cite{DK}, 
$$\omega_Y\cong (\pi_*\omega_S)^G,$$
hence, by Serre's Duality,
\beq\label{h2}
H^i(Y,\calO_Y) \cong H^{2-i}(Y,\omega_Y) \cong H^{2-i}(S,\omega_S)^G = H^i(S,\calO_S)^G = 0, \ i = 1,2.
\eeq
Since $\chi(\calO_Z) = h^0(\calO_Z) > 0$,  \eqref{ex3} gives $H^1(Y,\calO_Y)  = 0$ and $h^0(\calO_Z) = 1$. This is our case (i).

Assume now that $S^\sigma$, and hence $Z$, contains a one-dimensional part $Z_1$.  We use the following exact sequence 
$$0\to \omega_Y\to (\pi_*\omega_S)^G \to \mathcal{E}xt^1(\calH^2(G,\calO_S),\omega_Y) \to 0$$
from  Proposition 1.3 in \cite{DK}. It implies again  that $H^0(Y,\omega_Y) = H^2(Y,\calO_Y) = 0$.

Now, we use the exact sequence (2.3) from \cite{DK}
\beq\label{exx2}
0\to H^1(Y,\calO_Y)\to H^1 \to H^0(Y,\calH^1(G,\calO_S))\to H^2(Y,\calO_Y),
\eeq
where $H^1$ is a term in the exact sequence (2.1) from loc. cit.:
$$0\to H^1(G,H^0(S,\calO_S))\to H^1 \to H^0(G,H^1(S,\calO_S))\to H^2(G,H^0(S,\calO_S)).$$
Since $H^0(S,\calO_S)  \cong \Bbbk$ and $H^1(S,\calO_S) = 0$, we get  
$$H^1 \cong  \Bbbk.$$
Since $\dim H^0(Y,\calH^1(G,\calO_S)) = \dim H^0(Y,\calO_Z) > 0$, this gives 
$$H^1(Y,\calO_Y) = 0, \ \dim H^0(Y,\calO_Z) = 1.$$
Applying \eqref{odd}, we obtain $\chi(\calO_Z) = 1$, hence $h^1(\calO_Z) = 0$. This is our case (ii).
\end{proof}

\begin{remark} I do not know  whether the first case can occur. The second case occurs often, for example when the involution is the deck transformation of a superelliptic degree 2 separable map $S\to \sfD$, where $\sfD$ is a symmetric quartic  del Pezzo surface (see \cite{CD}). 

If $S$ is not classical surface, we have more possibilities. For example, the superelliptic separable map of a $\bmu_2$-surface gives an example of an involution with $Z$ equal to the union of an isolated fixed point and a connected curve of arithmetic genus 1.\footnote{The analysis of the fixed locus  in case of non-classical Enriques surfaces reveals a missing case in \cite{DK}:$X^\sigma$ may consist of an isolated fixed point and a connected curve.}  An example when $S^\sigma$ is an isolated fixed point or a connected rational curve is easy to construct. The first is obtained from  Example 2.8 in \cite{DK} by dividing the 
K3 surface by a fixed point free involution. The quotient is a $\mu_2$-surface. The second example is obtained from superelliptic map of an $\balpha_2$-surface.
\end{remark}

\begin{proposition}\label{prop}  
Let $S$ be a classical Enriques surface. Assume that $S^\sigma$ consists of one point $s_0$.  Then $\Lef(\sigma) = 4$. 
\end{proposition}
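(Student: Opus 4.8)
The plan is to evaluate the right-hand side of the fixed-point formula \eqref{lf1} for $X=S$ and to show it equals $4$. I would localize at the fixed point: set $A=\calO_{S,s_0}$ with maximal ideal $\frakm$ and residue field $\Bbbk$, and let $\calJ=\calJ(\sigma)_{s_0}$, so that $\ell:=\length(\calO_{S^\sigma})=\dim_\Bbbk A/\calJ$ is the length of the fixed scheme; since $S^\sigma=\{s_0\}$, each of the three sheaves in \eqref{lf1} is concentrated at $s_0$ and each Euler characteristic is a $\Bbbk$-dimension. Because $\Omega_S^1$ is locally free of rank $2$, one has $\Omega_S^1\otimes\calO_{S^\sigma}\cong\calO_{S^\sigma}^{\oplus 2}$ and hence $\chi(\Omega_S^1\otimes\calO_{S^\sigma})=2\ell$. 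For the conormal term, write $T=1+\sigma^*$; in characteristic $2$ the generators $a-\sigma^*(a)$ of $\calJ$ equal $T(a)$, so $\calJ=(T(x),T(y))$ for local coordinates $x,y$. As $V(\calJ)=\{s_0\}$ inside the smooth surface, $T(x),T(y)$ is a system of parameters, hence a regular sequence; thus $\calJ$ is a complete intersection, $\calJ/\calJ^2\cong(A/\calJ)^{\oplus 2}$, and $\chi(\calJ/\calJ^2)=2\ell$. Substituting into \eqref{lf1}, the two copies of $2\ell$ cancel and I obtain the clean reduction
\[
\Lef(\sigma)=\ell+2\ell-2\ell=\ell=\length(\calO_{S^\sigma}).
\]

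Next I would compute $\ell$ using the hypothesis through Theorem~\ref{thm3.1}(i). Locally $R:=\calO_{Y,y_0}=A^\sigma$, and $T$ is an $R$-linear operator with $T^2=0$, $\Ker T=R$, and $\im T\subseteq\frakm_R$. The finite-length sheaf $\calO_Z=\calO_Y/\im T$ satisfies $h^0(\calO_Z)=\dim_\Bbbk R/\im T=1$, which forces $\im T=\frakm_R$. Consequently $\calJ=\frakm_R A$ and $\ell=\dim_\Bbbk A\otimes_R\Bbbk$. Tensoring the exact sequence of $R$-modules
\[
0\to R\to A\xrightarrow{\ T\ }\frakm_R\to 0
\]
with $\Bbbk=R/\frakm_R$, and using that $\Bbbk\to A\otimes_R\Bbbk$ is injective, yields $\ell=1+\dim_\Bbbk\frakm_R/\frakm_R^2=1+e$, where $e$ is the embedding dimension of $Y$ at $y_0$.

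Everything therefore comes down to proving $e=3$. The lower bound is the easy direction: $Y$ must be singular at $y_0$, for if it were smooth the branch locus of the double cover $\pi$ would be a divisor and $S^\sigma$ could not be the isolated point $s_0$; hence $e\ge 3$ and $\ell\ge 4$ (consistent with the fact that $\Lef(\sigma)=12-2m$ is even, $m$ being the dimension of the $(-1)$-eigenspace of $\sigma$ on $H^2$). The main obstacle is the reverse bound $e\le 3$: I must show that $\frakm_R=\im T$ is generated as an $R$-module by the three classes $T(x),T(y),T(xy)$, equivalently that the wild $\bbZ/2$-quotient singularity $(Y,y_0)$ is a hypersurface (rational double point) singularity. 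I expect to reach this by putting $\sigma^*$ in a local normal form — exploiting $T^2=0$ and the $\sigma$-invariance of $T(x),T(y)$ — and checking directly that every $T(x^iy^j)$ lies in $\frakm_R^2+R\,T(x)+R\,T(y)+R\,T(xy)$. Granting $e=3$ gives $\ell=4$, and hence $\Lef(\sigma)=4$.
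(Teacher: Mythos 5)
Your two reductions are correct, and they are in fact a cleaner way of organizing what the paper's computation does implicitly: since $\calJ(\sigma)_{s_0}=(T(x),T(y))$ is $\frakm$-primary in a two-dimensional regular local ring, it is a complete intersection, so the conormal term and the $\Omega^1_S$-term in \eqref{lf1} cancel and $\Lef(\sigma)=\ell=\dim_\Bbbk A/\calJ$ (in the paper this cancellation appears numerically as $4+8-8$); likewise the exact sequence $0\to R\to A\xrightarrow{\ T\ }\frakm_R\to 0$ together with $h^0(\calO_Z)=1$ from Theorem \ref{thm3.1}(i) correctly gives $\ell=1+e$, and the purity argument for $e\ge 3$ is sound. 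But the proof stops exactly where all the difficulty of the proposition is concentrated: you never prove the upper bound $e\le 3$, i.e.\ that the wild $\bbZ/2$-quotient singularity $(Y,y_0)$ has embedding dimension $3$. The sentence ``I expect to reach this by putting $\sigma^*$ in a local normal form'' is a plan, not an argument. Producing a normal form for a wildly ramified involution of $\Bbbk[[u,v]]$ and computing its ring of invariants is precisely the content of Artin's paper \cite{Artin}, and this is the ingredient the paper's proof invokes: by Artin's classification (see also \cite{DK}, Remark 2.6), the hypothesis $h^0(\calO_Z)=1$ forces $\hat{\calO}_{Y,y}$ to be the rational double point $D_4^{(1)}$, namely $\Bbbk[[x,y,z]]/(z^2+xyz+x^2y+xy^2)$, and the explicit invariants show that $\calJ(\sigma)$ generates $(u^2,v^2)$ in $\Bbbk[[u,v]]$, whence $\ell=4$.

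Note also why your planned direct check is not routine: by Nakayama, $1+e$ is the minimal number of generators of $A$ as an $R$-module, so showing that $\frakm_R=\im T$ is generated by $T(x),T(y),T(xy)$ amounts to showing that $A$ is generated over $R$ by $1,x,y,xy$. That is not a formal consequence of $T^2=0$ and $\sigma$-invariance of $T(x),T(y)$; it is genuinely the structure theory of wild involutions in dimension two, i.e.\ a restatement of what must be proved rather than a step toward proving it. As it stands, your argument establishes only $\Lef(\sigma)=1+e\ge 4$; to get equality you must either quote Artin's theorem, as the paper does, or reprove its relevant case.
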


\begin{proof} Let $\pi:S\to Y = S/(\sigma)$ be the quotient morphism and $y = \pi(s_0)$. Since $h^0(\calO_Z) = 1$, it 
follows from \cite{Artin} that  the formal completion of the local ring 
$\calO_{Y,y}$ is a rational double point of type $D_4^{(1)}$ isomorphic to $\Bbbk[[x,y,z]]/(z^2+xyz+x^2y+xy^2)$ (see \cite{DK}, Remark 2.6). Moreover, identifying 
$\hat{\calO}_{Y,y}$ with the ring of invariants of $\hat{\calO}_{X,x_0}= \Bbbk[[u,v]]$, we have
$$x = u(u+y),  \ y = v(v+x), z= xu+yv.$$
This implies that the  ideal $\calJ(\sigma)_{s_0}$ generates the ideal $(u^2,v^2)$ in $\Bbbk[[u,v]]$. Applying \eqref{lf1},
 we easily obtain
$$\Lef(\sigma) = \dim_\Bbbk\Bbbk[[u,v]]/(u^2,v^2)+\dim_\Bbbk (u^2,v^2)/(u^4,v^4,u^2v^2)$$
$$-2\dim_\Bbbk\Bbbk[[u,v]]/(u^2,v^2) = 4+8-8 = 4.$$
\end{proof}

Since, for any $\sigma\in \Aut_{\ct}(S)$, we have  $\Lef(\sigma) = 12$, we obtain the following.

\begin{corollary}\label{corr} Let $\sigma$ be a wild numerically trivial automorphism of order 2 of a classical Enriques surface $S$. Then $S^\sigma$ is a connected curve. 
\end{corollary}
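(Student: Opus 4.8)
The plan is to invoke the dichotomy established in Theorem \ref{thm3.1}: for a wild involution $\sigma$ of a classical Enriques surface, the fixed locus $S^\sigma$ is either (i) a single isolated point or (ii) a connected curve. To prove the corollary it suffices to exclude case (i), and I would do this by computing $\Lef(\sigma)$ in two independent ways and comparing the two values.

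First I would compute the Lefschetz number of a numerically trivial $\sigma$ directly from the $\ell$-adic Betti numbers. An Enriques surface has $b_0 = b_4 = 1$, $b_1 = b_3 = 0$, and $b_2 = 10$, so $\sigma$ acts as the identity on $H^0$ and $H^4$ (contributing $1 + 1$), the odd cohomology vanishes, and the only remaining contribution is $\Tr_2(\sigma)$. The key point is that $H^2_{\et}(S,\bbQ_\ell) \cong \NS(S)\otimes\bbQ_\ell \cong \Num(S)\otimes\bbQ_\ell$, the torsion class $K_S$ dying after tensoring with $\bbQ_\ell$; since $\sigma$ is numerically trivial it acts as the identity on $\Num(S)$, hence on $H^2_{\et}(S,\bbQ_\ell)$, so $\Tr_2(\sigma) = 10$. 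This yields $\Lef(\sigma) = 1 - 0 + 10 - 0 + 1 = 12$, the value recorded just before the corollary.

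Next I would rule out case (i). If $S^\sigma$ were the single point $s_0$, then Proposition \ref{prop} gives $\Lef(\sigma) = 4$. Comparing with the value $12$ obtained above yields the contradiction $4 \ne 12$. Therefore case (i) cannot occur, and by Theorem \ref{thm3.1} the fixed locus $S^\sigma$ must be a connected curve, which is exactly the assertion.

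The argument is short because the heavy lifting is already done in Theorem \ref{thm3.1} and Proposition \ref{prop}; the only step requiring care — and the one place where the hypothesis genuinely enters — is the Lefschetz computation. One must check that numerical triviality (trivial action on $\Num(S)$), rather than the a priori stronger cohomological triviality (trivial action on all of $\Pic(S)$), already forces $\Tr_2(\sigma) = 10$. This is indeed so, since passing to $\bbQ_\ell$-coefficients kills the torsion subgroup generated by $K_S$, and the rational representation on $H^2_{\et}(S,\bbQ_\ell)$ therefore factors through the action on $\Num(S)$.
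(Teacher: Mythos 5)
Your proof is correct and follows exactly the paper's own route: the dichotomy of Theorem \ref{thm3.1}, the value $\Lef(\sigma)=4$ from Proposition \ref{prop} in the isolated-point case, and the computation $\Lef(\sigma)=1+10+1=12$ for a numerically trivial automorphism (which, by the paper's definition, acts trivially on $H^2_{\et}(S,\bbQ_\ell)$), giving the contradiction that excludes case (i). Your extra remark that numerical triviality suffices because torsion dies in $\bbQ_\ell$-coefficients is a worthwhile clarification of a point the paper passes over silently.
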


 \begin{proposition}\label{prop2} Let $S$ be a classical Enriques surface and $\sigma$ be a wild numerically trivial automorphism of order 2. Assume that $(S^\sigma)_{\red}$ is  contained in a fiber $F$ of a genus one fibration on $S$. 
Then $(S^\sigma)_\red = F_{\red}$ or consists of all irreducible components except one. 
 \end{proposition}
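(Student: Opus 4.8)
The plan is to reduce to an analysis of $\sigma$ on the individual components of $F$ and then to pin down the global count of fixed components using $\Lef(\sigma)=12$.

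First I would record that $\sigma$ preserves $F$ and each of its components. Being numerically trivial, $\sigma$ fixes the class $[F]\in\Num(S)$, hence carries fibers of the fibration to fibers; since $(S^\sigma)_\red\subset F$ is a nonempty curve, the induced automorphism of $\bbP^1$ fixes the image point of $F$, so $\sigma(F)=F$. As distinct components of a fiber represent distinct classes in $\Num(S)$ and $\sigma$ fixes every such class, each component $C_i$ is $\sigma$-invariant, and $\sigma$ restricts to an involution $\sigma_i$ of $C_i$. Using $\cha(\Bbbk)=2$, a nontrivial involution of $\bbP^1$ is unipotent and therefore has a single fixed point; so each $C_i$ is either \emph{fixed} (pointwise, hence lying in $(S^\sigma)_\red$) or \emph{moving}, with $\sigma_i$ fixing exactly one point $p_i\in C_i$.

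Next I would exploit Corollary \ref{corr}: $(S^\sigma)_\red$ is a connected curve, so it is exactly the union of the fixed components. If $C_i$ is moving, then every node of $F$ on $C_i$ is the transversal intersection of two $\sigma$-invariant curves and is thus fixed, hence equals $p_i$; consequently all nodes of $C_i$ coincide at $p_i$, and at least one neighbor of $C_i$ is a fixed component (otherwise $p_i$ would be an isolated fixed point off the curve, contradicting the Corollary). In particular the union of the fixed components equals $(S^\sigma)_\red$, and the assertion is equivalent to the statement that at most one component of $F$ is moving.

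To bound the number of moving components I would pass through the classification of genus-one fibers by dual graph. For cyclic fibers ($I_n$, type $\tilde A_{n-1}$) each component carries two nodes at two distinct points, so the previous step forbids any moving component and forces $(S^\sigma)_\red=F_\red$. For the tree-type and tangential fibers a moving component can only be a leaf (or, in fibers like type $IV$ where all components pass through one point, a component through that common point), and the remaining task is to exclude two or more moving components. Here I would invoke $\Lef(\sigma)=12$ together with the wild fixed-point formula \eqref{lf1}: once the local form of $\calJ(\sigma)$ along the fixed curve and at each $p_i$ is identified (as in the computation of Proposition \ref{prop}), the reduced fixed curve contributes a fixed amount while each further moving component adds a definite positive correction coming from $\chi(\calJ(\sigma)/\calJ(\sigma)^2)$ and $\chi(\calO_{S^\sigma})$, and matching the sum to $12$ permits at most one moving component.

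The main obstacle is this last computation: in configurations such as type $IV$ the combinatorial constraint of the third step does not by itself bound the number of moving components, so the decisive input is the precise non-reduced structure of $S^\sigma$ — the local generators of $\calJ(\sigma)$ along the pointwise-fixed curve and at the points $p_i$ — which must be controlled well enough to read off the local contributions to \eqref{lf1}.
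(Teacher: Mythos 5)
Your first three steps track the paper's own argument exactly: numerical triviality forces each component of $F$ to be $\sigma$-invariant, a wild involution acts on an invariant $\bbP^1$ with exactly one fixed point, and Corollary \ref{corr} (connectedness, no isolated fixed points) forces the unique fixed point of every moving component to be a node lying on a pointwise-fixed component. But your proof stops being a proof at precisely the step that matters: the exclusion of two or more moving components. You do not carry out the Lefschetz computation you appeal to; you only assert that the pointwise-fixed curve "contributes a fixed amount" to \eqref{lf1} and that each moving component "adds a definite positive correction," and then you yourself flag that identifying the local generators of $\calJ(\sigma)$ is the unresolved obstacle. That assertion is not merely unproved, it is doubtful as stated: unlike the tame case (where $S^\sigma$ is reduced and smooth and \eqref{lf2} applies), the fixed scheme of a wild involution has no canonical local form along a pointwise-fixed curve. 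One can write down involutions of $\Bbbk[[u,v]]$ in characteristic $2$ fixing $\{u=0\}$ pointwise whose fixed ideal is $(u)$, e.g.\ $(u,v)\mapsto (u,v+u)$, and others where it is $(u^2)$, e.g.\ $(u,v)\mapsto (u/(1+u),v)$, or $(u^2,uv)$, e.g.\ $(u,v)\mapsto(u/(1+u),v/(1+u))$. So the terms $\chi(\calO_{S^\sigma})$, $\chi(\calJ(\sigma)/\calJ(\sigma)^2)$, $\chi(\Omega^1_S\otimes\calO_{S^\sigma})$ are not functions of the configuration $(S^\sigma)_\red$ together with the points $p_i$; matching "the sum" to $\Lef(\sigma)=12$ is not an operation you are in a position to perform.

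The paper does not go this route at all: formula \eqref{lf1} is used only once, in Proposition \ref{prop}, to handle the isolated-fixed-point case, and the proof of Proposition \ref{prop2} finishes combinatorially, arguing directly from the structure of the possible fibers that connectedness of $S^\sigma$ plus the one-fixed-point property on each non-fixed component leaves at most one component outside $(S^\sigma)_\red$. Your instinct that the combinatorics alone is delicate for configurations such as type $IV$ or $\tilde{D}_4$ (where several end components could a priori all have their single fixed point on the pointwise-fixed part without breaking connectedness) is a fair criticism of how tersely that inspection is recorded, but it does not excuse replacing the paper's finishing step with a computation that is neither executed nor, in the form you describe, executable. As it stands, your argument proves the proposition only for fibers of cyclic type $\tilde{A}_{n-1}$ with transversal intersections; for all remaining fiber types the conclusion is conjectured, not derived.
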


\begin{proof} Suppose $(S^\sigma)_{\red}\ne F_{\red}$. By the previous proposition, $S^\sigma$ is a connected part of $F$.  Since $\sigma$ fixes each irreducible component of $F$, and has one fixed point on each component which is not contained in $S^\sigma$, it is easy to see from the structure of fibres that $(S^\sigma)_{\red}$ consists of all components except one. 
\end{proof}

\section{Cohomologically trivial automorphisms}
 Let $\phi:S\to \bbP^1$ be a genus one fibration defined by a pencil $|2F|$. Let $D$ be an effective divisor on $S$. We denote by $D_\eta$ its restriction to the generic fiber $S_\eta$. If $D$ is of relative degree $d$ over the base of the fibration, then $D_\eta$ is an effective divisor of degree $d$ on $S_\eta$. In particular, if $D$ is irreducible, the divisor $D_\eta$ is a point on $S_\eta$ of degree $d$. Since $\phi$ has a double fiber, the minimal degree of a point on $S_\eta$ is equal to 2. 

\begin{lemma} $S$ admits a genus  one fibration  $\phi:S\to \bbP^1$ such that  $\sigma\in \Aut_{\ct}(S)$ leaves invariant all fibers 
of $\phi$ and at least 2 (3 if  $K_S\ne 0$) points of degree two on $S_\eta$. 
\end{lemma}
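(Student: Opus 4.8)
The plan is to extract the fibration from an isotropic sequence and to use that a cohomologically trivial $\sigma$, acting as the identity on $\Pic(S)$, fixes every $(-2)$-curve and every half-fiber individually: a $(-2)$-curve is the unique effective divisor in its class and half-fibers do not move, so $\sigma[R]=[R]$ already forces $\sigma(R)=R$, and likewise for half-fibers. Fix a canonical isotropic sequence $(f_1,\dots,f_{10})$ and let $\phi=\phi_{|2F_1|}\colon S\to\bbP^1$ be the genus one fibration attached to the nef isotropic class $f_1$. Since $\sigma$ fixes the class of the half-fiber $F_1$ and $|F_1|=\{F_1\}$, $\sigma$ carries fibers to fibers and induces $\bar\sigma\in\Aut(\bbP^1)$.

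First I would prove $\bar\sigma=\id$, i.e. that $\sigma$ leaves every fiber invariant. Each multiple fiber ($2F_1$, and $2(F_1+K_S)$ when $K_S\neq 0$) is $\sigma$-invariant because its half-fiber is $\sigma$-fixed, and each reducible fiber is $\sigma$-invariant because its components are $\sigma$-fixed $(-2)$-curves; hence $\bar\sigma$ fixes each of the corresponding points of $\bbP^1$. As an automorphism of $\bbP^1$ fixing three points is the identity, it is enough that $\phi$ carry at least three such marked fibers, and I would arrange this by choosing $f_1$ so that $\phi$ has a reducible fiber besides its one or two multiple fibers. When $\cha(\Bbbk)=2$ and $\sigma$ has $2$-power order there is a shortcut in the classical case: $\bar\sigma$ is then unipotent or trivial, and a nontrivial unipotent automorphism of $\bbP^1$ fixes only one point, which is incompatible with the two invariant multiple fibers. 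Either way $\bar\sigma=\id$, so $\sigma$ restricts to $\sigma_\eta\in\Aut_K(S_\eta)$ with $K=\Bbbk(\bbP^1)$.

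Next I would produce the invariant points of degree two from bisections. For $j\geq 2$ one has $f_j\cdot f_1=1$, so in an effective representative of $f_j$ exactly one component $R_j$ meets $f_1$, the others being vertical; this $R_j$ is an irreducible bisection, it is rigid (being either negative or a half-fiber) with $\sigma$-fixed class, hence $\sigma(R_j)=R_j$, and $(R_j)_\eta$ is a $\sigma$-invariant point of degree two of class $(f_j)_\eta\in\Pic^2(S_\eta)$. Distinct $f_j$ give distinct such classes unless their difference restricts to zero on $S_\eta$, i.e. lies among the vertical classes; with nine candidates $f_2,\dots,f_{10}$ this yields at least the required number of distinct invariant points. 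In the classical case a bisection realized as an irreducible half-fiber $F$ of another fibration comes with the disjoint translate $F+K_S$, an equally $\sigma$-invariant irreducible bisection restricting to a second point of degree two in the same class $(f_j)_\eta$; this furnishes the third invariant point, available precisely when $K_S\neq 0$.

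I expect the base-triviality step to be the main obstacle, since it must hold in every characteristic. The difficulty is that a genus one fibration can have all its fibers away from the multiple ones irreducible (for instance twelve nodal fibers), leaving only one or two marked points and a potentially nontrivial $\bar\sigma$. I would resolve this by exhibiting a fibration with a reducible fiber, using that a nontrivial cohomologically trivial $\sigma$ forces $S$ to be nodal — a general, unnodal Enriques surface has trivial $\Aut_{\ct}(S)$ — so that the necessary $(-2)$-curves are present and can be placed in a fiber of a suitable $\phi$; the classical characteristic $2$ case is covered by the unipotent shortcut. The remaining bookkeeping — distinctness of the classes $(f_j)_\eta$ and, in the classical case, realizing one bisection as an honest half-fiber so that its $K_S$-translate supplies the third point — is routine once a sufficiently non-degenerate fibration has been fixed.
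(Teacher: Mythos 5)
The half of your argument that produces the invariant degree-two points is essentially the paper's proof: it too fixes a canonical isotropic sequence via Theorem 3.4.1 of \cite{CD}, takes $\phi$ to be one of the associated pencils, and obtains the points from rigid $\sigma$-invariant bisections --- namely half-fibers of other fibrations together with their $K_S$-translates (its case $c\ge 3$), or the pair consisting of a half-fiber $F_2$ and a $(-2)$-curve $R_1$ from the sequence plus $F_2'\in|F_2+K_S|$ (its case $c=2$). But you overlook the one situation where this mechanism genuinely breaks down, and which the paper explicitly excludes and defers to its final section: the extra $\tilde{E}_8$-special surfaces in characteristic $2$, for which the sequence has only one nef member. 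There all nine classes $f_2,\dots,f_{10}$ have the same horizontal component $R_1$ and no bisection is a half-fiber, so your ``nine candidates'' collapse to a single invariant point. Relatedly, your distinctness bookkeeping cannot be done by counting: already in the case $c=2$ the nine candidates yield exactly two bisections, and which of the two pencils one must use depends on the shape of the sequence (the paper's $k=0$ subcase) --- i.e., the bookkeeping \emph{is} the paper's case analysis.

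The more serious gap is the base-triviality step, which you rightly single out as the crux. Your resolution rests on the claim that a nontrivial cohomologically trivial automorphism forces $S$ to be nodal; over $\bbC$ that is a corollary of the Mukai--Namikawa classification, which is precisely what this paper sets out to reprove without transcendental methods, and you offer no independent proof, so in context the argument is circular. Even granting nodality, you must produce a fibration carrying simultaneously the bisections of your second step and a reducible non-multiple invariant fiber; and in the non-classical case ($K_S=0$, characteristic $2$) there is only one multiple fiber, so one reducible fiber yields only two fixed points on $\bbP^1$, not three. Your unipotent shortcut is doubly unavailable: it is confined to the classical case, and it presupposes that $\sigma$ (hence $\bar{\sigma}$) has $2$-power order, which is not known at this stage --- a nontrivial $\bar{\sigma}$ of odd order in characteristic $2$ fixes exactly two points, so fixing the two multiple fibers gives no contradiction. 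Note finally that you cannot import the paper's later results here: base-triviality for all fibrations (Theorem \ref{thm4.4}) is proved only after the order bound of Theorem \ref{prop1}, by Lefschetz-type arguments, and the paper's own proof of the Lemma never performs your fixed-point count on the base; it confines itself to exhibiting the invariant curves. Your instinct that this clause needs a genuine argument is sound, but the one you propose does not close it.
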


\begin{proof}   By Theorem 3.4.1 from \cite{CD} (we will  treat the exceptional case when $S$ is extra $E_8$-special in characteristic 2 in the last  section), one can find a canonical isotropic sequence $(f_1,\ldots,f_{10})$ with nef classes 
$f_1,f_{k_2},\ldots,f_{k_c}$ where $c\ge 2$. 

Assume $c \ge  3$. Then we have three genus one fibrations $|2F_1|, |2F_{k_2}|,$ and $ |2F_{k_3}|$ defined by $f_1,f_{k_2}, f_{k_3}$. The restriction of $F_{k_2}$ and $F_{k_3}$ to the general fiber $S_\eta$ of the genus one fibration defined by the pencil $|2F_1|$ are two degree 2 points. If $K_S\ne 0$, then the half-fibers $F_{k_2}'\in |F_{k_2}+K_S|$ and $F_{k_3}'\in |F_{k_3}+K_S|$ define two more degree two points.

Assume $c = 2$. Let $f_1 = [F_1], f_2 = [F_{k_2}]$. By definition of a canonical isotropic sequence, we have the following graph of irreducible curves

\xy (-30,10)*{};
@={(0,0),(10,0),(30,0),(40,0),(0,-10),(10,-10),(30,-10),(40,-10)}@@{*{\bullet}};
(0,0)*{};(15,0)*{}**\dir{-};(25,0)*{};(40,0)*{\bullet}**\dir{-};(0,-10)*{};,(15,-10)*{}**\dir{-};(25,-10)*{};(40,-10)*{}**\dir{-};
(0,-10)*{};(0,0)*{}**\dir{-};
(20,0)*{\cdots};(20,-10)*{\cdots};(-3,0)*{F_1};(10,3)*{R_1};(-3,-10)*{F_2};(30,3)*{R_{k-1}};(40,3)*{R_k};(10,-13)*{R_{k+1}};(30,-13)*{R_7};(40,-13)*{R_8};
\endxy

\medskip
Assume $k\ne 0$. Let $\phi:S\to \bbP^1$ be a genus one fibration defined by the pencil $|2F_1|$. Then the curves $F_2$ and $R_1$ define two points of degree two on $S_\eta$. If $S$ is classical, we have the third 
point defined by a curve $F_2'\in |F_2+K_S|$. Since $\sigma$ is cohomologically trivial, it leaves the half-fibers $F_1,  
F_2,$ and $F_2'$ invariant. It also leaves invariant the $(-2)$-curve $R_1$. If $k=0$, we take for  $\phi$ the fibration defined by the pencil $|2F_{k_2}|$ and get the same result.
\end{proof}

The next theorem extends the first assertion of Theorem \ref{thmmukai} from the Introduction to arbitrary characteristic. 

\begin{theorem}\label{prop1} 
The order of $\Aut_{\ct}(S)$ is equal to 1 or 2. 
\end{theorem}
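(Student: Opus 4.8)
The plan is to control $\Aut_\ct(S)$ by restricting cohomologically trivial automorphisms to the generic fiber of the genus-one fibration $\phi\colon S\to\bbP^1$ supplied by the Lemma. Because $\sigma$ leaves every fiber invariant, it acts trivially on the base and hence restricts to a $K$-automorphism $\sigma_\eta$ of $S_\eta$, where $K=\Bbbk(\bbP^1)$. The induced homomorphism $\Aut_\ct(S)\to\Aut_K(S_\eta)$ is injective, since an automorphism that is the identity on the dense fiber $S_\eta$ is the identity on $S$; so it is enough to bound its image.

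Assume first that $S_\eta$ is smooth, so that it is a genus-one curve and a torsor under its Jacobian $E=\Jac(S_\eta)$. Every $K$-automorphism then has the shape $\sigma_\eta(x)=\alpha(x)+t$ with a translation part $t\in E(K)$ and a linear part $\alpha\in\Aut_0(E)$, and translations act trivially on $\Pic^0(S_\eta)$. The heart of the argument is to show $\alpha=\id$, i.e. that $\sigma_\eta$ is a pure translation. Since the restriction $\Pic(S)\to\Pic(S_\eta)$ is surjective, cohomological triviality makes $\sigma$ act trivially on the quotient $\Pic(S_\eta)$, in particular on $\Pic^0(S_\eta)=E(K)$; as this action is through $\alpha$, the automorphism $\alpha$ fixes $E(K)$ pointwise. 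For $\alpha\ne\id$ the difference $\alpha-\id$ is an isogeny with finite kernel, so $\alpha$ fixes only finitely many points; hence as soon as $E(K)$ is infinite (the Jacobian has positive Mordell--Weil rank) we get $\alpha=\id$.

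Granting $\sigma_\eta=\tau_t$, the invariant degree-two data from the Lemma finish the proof. A translation $\tau_t$ carries a class of degree $d$ to itself plus $d\,t\in\Pic^0(S_\eta)$, so invariance of a degree-two class forces $2t=0$; thus $t\in E[2]$ and $\sigma\mapsto t$ is an injective homomorphism $\Aut_\ct(S)\to E[2]$. To sharpen the a priori bound $4$ to $2$, I use that $\sigma$ fixes an actual degree-two \emph{point}, not merely its class: the irreducible curve $R_1$ (or $F_{k_2}$) has fixed class, hence is preserved, so its restriction $R_1|_{S_\eta}$, a closed point of degree two, is $\sigma_\eta$-invariant. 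Over $\bar K$ this is a length-two subscheme $\{b,b'\}\subset S_\eta$, and a nonzero $t\in E[2]$ with $\tau_t\{b,b'\}=\{b,b'\}$ must interchange its two geometric points, which pins down $t=b'-b$ uniquely. Therefore the image of $\Aut_\ct(S)$ in $E[2]$ lies in $\{0,\,b'-b\}$, a group of order at most two, and $|\Aut_\ct(S)|\le 2$.

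The main obstacle is the step $\alpha=\id$. Cohomological triviality only yields triviality of $\alpha$ on the rational points $E(K)$, so the clean argument above needs $E(K)$ to be infinite, or at least not contained in $E[2]$: the latter is exactly what rules out $\alpha=-1$, which would otherwise contribute involutions $(-1,t)$ that, together with a nontrivial $2$-torsion translation, could produce a group of order $4$. In the extremal fibrations with Mordell--Weil rank $0$ one must instead force $\alpha=\id$ from the explicit configuration of $(-2)$-curves (for instance by producing enough invariant torsion classes, or by choosing within the canonical isotropic sequence a fibration whose Jacobian carries a section of infinite order). A second point requiring separate treatment is characteristic $2$, where $\phi$ may be quasi-elliptic and $S_\eta$ geometrically non-smooth; there the torsor description breaks down and one falls back on the fixed-locus analysis of wild involutions developed earlier in the paper.
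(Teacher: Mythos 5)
Your reduction to the generic fiber is sound as far as it goes (injectivity of $\Aut_{\ct}(S)\to\Aut_K(S_\eta)$, surjectivity of $\Pic(S)\to\Pic(S_\eta)$, the decomposition $\sigma_\eta=\tau_t\circ\alpha$), but the proof has a genuine gap, which you yourself flag: the crucial step $\alpha=\id$ is only established when $E(K)$ is infinite, or at least not contained in $E[2]$. This is not a removable technicality. The Jacobian of a genus-one fibration on an Enriques surface is a rational elliptic (or quasi-elliptic) surface, and by the Shioda--Tate formula its Mordell--Weil rank is $0$ as soon as the fibration has a fiber or half-fiber of type $\tilde{E}_8$, $\tilde{E}_7+\tilde{A}_1$, $\tilde{A}_7+\tilde{A}_1$, etc. Such extremal fibrations occur precisely on the surfaces for which the theorem has content: in the paper's examples of nontrivial cohomologically trivial involutions, the surface carries an $\tilde{E}_8$-fiber and every available genus-one fibration is extremal, so $E(K)$ is finite there and your argument never gets started. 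The case you defer to ``the explicit configuration of $(-2)$-curves'' is therefore the essential one, and no argument is supplied for it; the same goes for the quasi-elliptic possibility in characteristic $2$, which you also leave open.

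The paper's proof avoids this dichotomy entirely by replacing the Mordell--Weil/linear-part analysis with a fixed-point count on the geometric generic fiber, and this is the idea your proposal is missing. The Lemma provides not just invariant degree-two \emph{classes} but invariant degree-two \emph{points} (cut out on $S_\eta$ by the invariant curves $F_{k_2}$, $R_1$, and $F_2'$ when $K_S\ne 0$). Since $\sigma$ permutes the two geometric points lying over each of them, $\sigma^2$ fixes at least $6$ points of $S_{\bar{\eta}}$ if $p\ne 2$ (resp.\ $4$ points if $p=2$). But a nontrivial automorphism $x\mapsto\alpha(x)+t$ of an elliptic curve over an algebraically closed field has at most $\deg(\alpha-\id)\le 4$ fixed points (and at most $3$ in characteristic $2$, where $E[2]$ has at most two points), by the known structure of automorphism groups in any characteristic. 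Hence $\sigma^2=\id$ on $S_{\bar{\eta}}$, so $\sigma^2=\id$ on $S$; and the same count applied to two distinct nontrivial elements, which must share an orbit of cardinality $2$, forces the group to have order at most $2$. This needs no hypothesis whatsoever on $E(K)$ and treats all characteristics uniformly. If you wish to salvage your route, you would have to prove invariance of the linear part integrally (e.g.\ via the action on the N\'eron model or on torsion sections), not merely on the group $E(K)$ of rational points.
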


\begin{proof}  By the previous Lemma, $\Aut_{\ct}(S)$ leaves invariant a genus one fibration and 2 or 3 degree two points on its generic 
fiber. For any $\sigma\in \Aut_{\ct}(S)$, the automorphism $\sigma^2$ acts identically on the residue fields of these points. 
If $p\ne 2$ (resp. $p = 2$), we obtain that $\sigma$, acting on the geometric generic fiber $S_{\bar{\eta}}$, fixes 6 (resp. 4) points. 
The known structure of the  automorphism group of an elliptic curve over an algebraically closed field of any characteristic  (see \cite{Silverman}, Appendix A) shows that this is possible only if $\sigma$ is the identity. 

So far, we have shown only that  each non-trivial element in $\Aut_{\ct}(S)$ is of order 2. However, the previous argument also shows that any two elements in the group share a common orbit in $S_{\bar{\eta}}$ of cardinality 2. Again, the known structure of the  
automorphism group of an elliptic curve shows that this implies that the group is of order 2.
\end{proof}

\begin{lemma}\label{lem4.3} Let $F$ be a singular fiber of a genus one fibration on an elliptic surface. Let $\sigma$ be a non-trivial 
tame automorphism of order 2 that leaves invariant  each  irreducible  component 
of $F$. Then 
\beq\label{lfred}
e(F^\sigma) = e(F).
\eeq 
\end{lemma}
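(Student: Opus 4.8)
The plan is to deduce the identity \eqref{lfred} from the additivity of the Euler characteristic, by showing that $\sigma$ produces no fixed points beyond those forced combinatorially; concretely, that $e(F\setminus F^\sigma)=0$. Since $\sigma$ is tame, the fixed scheme is reduced and smooth by \cite{Iversen}, so each fixed point contributes $+1$ to $e(F^\sigma)$, and the compactly supported $\ell$-adic Euler characteristic is additive over a locally closed stratification. First I would stratify $F$ into its singular points $p_1,\dots,p_m$ together with the smooth loci $C_i^\circ:=C_i\setminus\{p_1,\dots,p_m\}$ of its irreducible components $C_i$, giving $e(F)=\sum_i e(C_i^\circ)+m$ and, applying the same stratification to the fixed locus, $e(F^\sigma)=\sum_i e(C_i^\circ\cap F^\sigma)+\#\{\text{fixed }p_j\}$. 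Everything then reduces to two points: that all $p_j$ are fixed, and that each open stratum contributes equally to both sums.

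The key geometric input is that $\sigma$ preserves each component. Hence every singular point $p_j$, being the set-theoretic meeting point of preserved components, is fixed by $\sigma$, and at such a crossing $\sigma$ preserves each branch, since the branches are pieces of the $C_i$ themselves. Thus all $m$ singular points lie in $F^\sigma$. For the open strata I use that each component of a (reducible) singular fiber is a smooth $\mathbb{P}^1$, on which $\sigma$ restricts to a tame involution — either the identity or an automorphism with exactly two fixed points. Writing $d_i$ for the number of singular points of $F$ lying on $C_i$ (all $\sigma$-fixed), one has $e(C_i^\circ)=2-d_i$. If $\sigma|_{C_i}=\mathrm{id}$ then $C_i^\circ\subset F^\sigma$ and the stratum cancels; otherwise $\sigma|_{C_i}$ has exactly two fixed points, forcing $d_i\le 2$, with precisely $2-d_i$ of them interior to $C_i^\circ$, so $e(C_i^\circ\cap F^\sigma)=2-d_i=e(C_i^\circ)$. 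In every case the contributions agree, and summing gives \eqref{lfred} for all reducible fibers.

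It remains to treat the irreducible singular fibers, whose single component carries its own singularity: type $\mathrm{II}$ (cusp) and type $I_1$ (node). For the cusp the normalization $\mathbb{P}^1\to F$ is a bijection, so $e(F^\sigma)=e\bigl((\mathbb{P}^1)^{\tilde\sigma}\bigr)=2=e(F)$ whether $\tilde\sigma$ is the identity or has two fixed points. The delicate case — and what I expect to be the main obstacle — is $I_1$: here component-invariance alone does \emph{not} control $\sigma$ at the self-node, since the two branches lie on the same component and could a priori be interchanged. Lifting $\sigma$ to the normalization and identifying the smooth locus with $\mathbb{G}_m$, a branch-preserving involution (``translation'' $t\mapsto -t$) fixes only the node and gives $e(F^\sigma)=1=e(F)$, whereas a branch-swapping involution (``inversion'' $t\mapsto t^{-1}$) fixes the node and two further points of $\mathbb{G}_m$, giving $e(F^\sigma)=3$. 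Thus the argument is completed precisely by verifying that $\sigma$ does not interchange the two branches at the node — automatic for every reducible fiber, where the branches are distinct invariant components, and the one point to be secured by hand for $I_1$. Equivalently, one may run the whole proof through the Lefschetz formula \eqref{lf2} applied to $F$, reducing \eqref{lfred} to the vanishing of the nontrivial part of the $\sigma$-action on $H^1(F,\mathbb{Q}_\ell)$, i.e. on $H^1$ of the dual graph, which holds exactly when no node has its two branches swapped.
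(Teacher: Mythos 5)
Your route is genuinely different from the paper's: the paper proves the lemma by exhaustion, listing for each Kodaira type the possible configurations of $F^\sigma$ compatible with component-invariance and checking $e(F^\sigma)=e(F)$ picture by picture, whereas you replace that enumeration by a uniform stratification argument, and your closing reformulation --- the lemma holds exactly when $\sigma$ acts trivially on $H^1$ of the dual graph, i.e.\ when no node has its two branches interchanged --- isolates precisely what is at stake. But the step you declare ``automatic for every reducible fiber'' is a genuine gap, and it is not repairable from the stated hypotheses. Invariance of the components forces a singular point of $F$ to be fixed only when that point is the \emph{unique} intersection point of the components through it; this fails for exactly one reducible type, $I_2$ (the paper's $\tilde{A}_1$), where the two invariant components meet in \emph{two} points, so $\sigma$ is only required to preserve the pair. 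The swap really occurs: on an elliptic surface in Weierstrass form with an $I_2$ fibre in characteristic $\neq 2$, the inversion $y\mapsto -y$ fixes the $A_1$-singular point of the Weierstrass model, hence lifts to the minimal resolution preserving both the exceptional component and the proper transform of the nodal cubic; yet it interchanges the two branches $y=\pm(x-x_0)\sqrt{h(x)}$ at the node, hence interchanges the two nodes of the $I_2$ fibre, and its fixed locus on the fibre consists of the four $2$-torsion points, all smooth points of $F$. All hypotheses of the lemma are satisfied, while $e(F^\sigma)=4\neq 2=e(F)$.

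The same involution on an $I_1$ fibre swaps the two branches at the node and fixes three points, giving $e(F^\sigma)=3\neq 1$, so the point you leave ``to be secured by hand'' cannot be secured at all: for the types $I_1$ and $I_2$ what is missing is a hypothesis (no branch or node swap, equivalently triviality of $\sigma$ on $H^1(F,\mathbb{Q}_\ell)$), not an argument. You should also know that your gap mirrors a blind spot in the paper itself: its list of pictures for $\tilde{A}_1$ shows only the configurations in which both nodes are fixed, and it asserts that $F^\sigma$ is a single point for $\tilde{A}_0(I_1)$, so the swapping involutions above are silently excluded there too; in the paper's applications this exclusion would have to come from the stronger global hypothesis of numerical triviality, not from component-invariance. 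In sum, your argument correctly proves the lemma for every fiber type other than $I_1$ and $I_2$, and correctly identifies the branch-swap obstruction, but as a proof of the statement as written it is incomplete for $I_1$ by your own admission and wrong for $I_2$, where you claimed the problem could not arise.
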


\begin{remark} Formula  \eqref{lfred} agrees with the Lefschetz fixed-point formula whose proof in the case of a reducible curve I could not find.
\end{remark}

\begin{proof} The following pictures exhibit possible sets of fixed points. Here the star denotes an irreducible component in $F^\sigma$, the red line denotes the isolated fixed point that equal to  the intersection of two components, the red dot denotes an isolated fixed point which is not the intersection point of two components.

\xy (-10,10)*{};(-10,-10)*{};
@={(0,0),(10,0),(20,0),(30,0),(40,0),(50,0),(60,0),(70,0)}@@{*{\bullet}};
(0,0)*{};(70,0)*{}**\dir{-};(20,0)*{};(20,-7)*{\bullet}**\dir{-};
(0,0)*{\bigstar};(20,0)*{\bigstar};(40,0)*{\bigstar};(60,0)*{\bigstar};(20,-7)*{{\color{red}\bullet}};(70,0)*{{\color{red}\bullet}};
(100,0)*{e(F^\sigma) = 10};(-10,0)*{\tilde{E}_8};
\endxy

\xy (-10,0)*{};(-10,-10)*{};
@={(0,0),(10,0),(20,0),(30,0),(40,0),(50,0),(60,0)}@@{*{\bullet}};
(0,0)*{};(60,0)*{}**\dir{-};(10,0)*{};(10,-7)*{\bullet}**\dir{-};(50,0)*{};(50,-7)*{}**\dir{-};
(10,0)*{\bigstar};(30,0)*{\bigstar};(50,0)*{\bigstar};(60,0)*{{\color{red}\bullet}};(10,-7)*{{\color{red}\bullet}};(50,-7)*{{\color{red}\bullet}};
(100,0)*{e(F^\sigma) = 10};(-10,0)*{\tilde{D}_8};(0,0)*{{\color{red}\bullet}};
\endxy

\xy (-10,0)*{};(-10,-10)*{};
@={(0,0),(10,0),(20,0),(30,0),(40,0),(50,0),(60,0),(70,0)}@@{*{\bullet}};
(0,0)*{};(60,0)*{}**\dir{-};(0,0)*{};(35,-7)*{\bullet}**\dir{-};(35,-7)*{};(70,0)*{}**\dir{-};(60,0)*{};(70,0)*{}**{\color{red}\dir{-}};
(10,0)*{\bigstar};(30,0)*{\bigstar};(35,-7)*{\bigstar};(50,0)*{\bigstar};
(100,0)*{e(F^\sigma) = 9};(-10,0)*{\tilde{A}_8};
\endxy

\xy (-10,0)*{};(-10,-10)*{};
@={(0,0),(10,0),(20,0),(30,0),(40,0),(50,0),(60,0),(70,0)}@@{*{\bullet}};
(0,0)*{};(60,0)*{}**{\color{red}\dir{-}};(0,0)*{};(35,-7)*{\bullet}**{\color{red}\dir{-}};(35,-7)*{};(70,0)*{}**{\color{red}\dir{-}};(60,0)*{};(70,0)*{}**{\color{red}\dir{-}};
(100,0)*{e(F^\sigma) = 9};(-10,0)*{\tilde{A}_8};
\endxy

\xy (-10,0)*{};(-10,-10)*{};
@={(0,0),(10,0),(20,0),(30,0),(40,0),(50,0),(60,0)}@@{*{\bullet}};
(0,0)*{};(60,0)*{}**\dir{-};(30,0)*{};(30,-7)*{\bullet}**\dir{-};
(0,0)*{\color{red}{\bullet}};(10,0)*{\bigstar};(30,0)*{\bigstar};(50,0)*{\bigstar};(30,-7)*{{\color{red}\bullet}};(60,0)*{{\color{red}\bullet}};
(100,0)*{e(F^\sigma) = 9};(-10,0)*{\tilde{E}_7};
\endxy

\xy (-10,0)*{};(-10,-10)*{};
@={(0,0),(10,0),(20,0),(30,0),(40,0),(50,0)}@@{*{\bullet}};
(0,0)*{};(20,0)*{}**\dir{-};(30,0)*{};(50,0)*{}**\dir{-};(10,0)*{};(10,-7)*{\bullet}**\dir{-};(40,0)*{};(40,-7)*{}**\dir{-};
(10,0)*{\bigstar};(20,0)*{};(30,0)*{}**{\color{red}\dir{-}};(40,0)*{\bigstar};(50,0)*{{\color{red}\bullet}};(10,-7)*{{\color{red}\bullet}};(40,-7)*{{\color{red}\bullet}};
(100,0)*{e(F^\sigma) = 9};(-10,0)*{\tilde{D}_7};(0,0)*{{\color{red}\bullet}};
\endxy

\xy (-10,0)*{};(-10,-10)*{};
@={(0,0),(10,0),(20,0),(30,0),(40,0),(50,0),(60,0)}@@{*{\bullet}};
(0,0)*{};(60,0)*{}**\dir{-};(0,0)*{};(30,-7)*{\bullet}**\dir{-};(30,-7)*{};(60,0)*{}**\dir{-};
(10,0)*{\bigstar};(30,0)*{\bigstar};(30,-7)*{\bigstar};(50,0)*{\bigstar};
(100,0)*{e(F^\sigma) = 8};(-10,0)*{\tilde{A}_7};
\endxy

\xy (-10,0)*{};(-10,-10)*{};
@={(0,0),(10,0),(20,0),(30,0),(40,0),(50,0),(60,0)}@@{*{\bullet}};
(0,0)*{};(60,0)*{}**{\color{red}\dir{-}};(0,0)*{};(30,-7)*{\bullet}**{\color{red}\dir{-}};(30,-7)*{};(60,0)*{}**{\color{red}\dir{-}};(60,0)*{};(60,0)*{}**{\color{red}\dir{-}};
(100,0)*{e(F^\sigma) = 8};(-10,0)*{\tilde{A}_7};
\endxy

\xy (-10,0)*{};(-10,-10)*{};
@={(0,0),(10,0),(20,0),(30,0),(40,0)}@@{*{\bullet}};
(0,0)*{};(40,0)*{}**\dir{-};(20,0)*{};(20,-7)*{\bullet}**\dir{-};
(0,0)*{\color{red}{\bullet}};(20,0)*{\bigstar};(0,0)*{\bigstar};(40,0)*{\bigstar};(30,-7)*{\bigstar};(20,-7)*{};(30,-7)*{}**\dir{-};
(100,0)*{e(F^\sigma) = 8};(-10,0)*{\tilde{E}_6};
\endxy

\xy (-10,0)*{};(-10,-10)*{};
@={(0,0),(10,0),(20,0),(30,0),(40,0)}@@{*{\bullet}};
(0,0)*{};(40,0)*{}**\dir{-};(10,0)*{};(10,-7)*{\bullet}**\dir{-};(30,0)*{};(30,-7)*{}**\dir{-};
(10,0)*{\bigstar};(30,0)*{\bigstar};(40,0)*{{\color{red}\bullet}};(10,-7)*{{\color{red}\bullet}};(30,-7)*{{\color{red}\bullet}};
(100,0)*{e(F^\sigma) = 8};(-10,0)*{\tilde{D}_6};(0,0)*{{\color{red}\bullet}};
\endxy

\xy (-10,0)*{};(-10,-10)*{};
@={(0,0),(10,0),(20,0),(30,0),(40,0),(50,0)}@@{*{\bullet}};
(0,0)*{};(20,0)*{}**\dir{-};(30,0)*{};(50,0)*{}**\dir{-};(0,0)*{};(25,-7)*{\bullet}**\dir{-};(25,-7)*{};(50,0)*{}**\dir{-};
(10,0)*{\bigstar};(25,-7)*{\bigstar};(40,0)*{\bigstar};(20,0)*{};(30,0)*{}**{\color{red}\dir{-}};
(100,0)*{e(F^\sigma) = 7};(-10,0)*{\tilde{A}_6};
\endxy

\xy (-10,0)*{};(-10,-10)*{};
@={(0,0),(10,0),(20,0),(30,0),(40,0),(50,0)}@@{*{\bullet}};
(0,0)*{};(50,0)*{}**{\color{red}\dir{-}};(0,0)*{};(25,-7)*{\bullet}**{\color{red}\dir{-}};(25,-7)*{};(50,0)*{}**{\color{red}\dir{-}};(60,0)*{};(60,0)*{}**{\color{red}\dir{-}};
(100,0)*{e(F^\sigma) = 7};(-10,0)*{\tilde{A}_6};
\endxy

\xy (-10,0)*{};(-10,-10)*{};
@={(0,0),(10,0),(20,0),(30,0),(40,0)}@@{*{\bullet}};
(0,0)*{};(40,0)*{}**\dir{-};(0,0)*{};(20,-7)*{\bullet}**\dir{-};(20,-7)*{};(40,0)*{}**\dir{-};
(10,0)*{\bigstar};(20,-7)*{\bigstar};(30,0)*{\bigstar};
(100,0)*{e(F^\sigma) = 6};(-10,0)*{\tilde{A}_5};
\endxy

\xy (-10,0)*{};(-10,-10)*{};
@={(0,0),(10,0),(20,0),(30,0),(40,0)}@@{*{\bullet}};
(0,0)*{};(40,0)*{}**{\color{red}\dir{-}};(0,0)*{};(20,-7)*{\bullet}**{\color{red}\dir{-}};(20,-7)*{};(40,0)*{}**{\color{red}\dir{-}};(60,0)*{};(60,0)*{}**{\color{red}\dir{-}};
(100,0)*{e(F^\sigma) = 6};(-10,0)*{\tilde{A}_5};
\endxy

\xy (-10,0)*{};(-10,-10)*{};
@={(0,0),(10,0),(20,0)}@@{*{\bullet}};
(0,0)*{};(20,0)*{}**\dir{-};(0,-7)*{};(10,0)*{}**\dir{-};(10,0)*{};(20,-7)*{}**\dir{-};
(10,0)*{\bigstar};(0,0)*{{\color{red}\bullet}};(0,-7)*{{\color{red}\bullet}};(20,-7)*{{\color{red}\bullet}};
(100,0)*{e(F^\sigma) = 6};(-10,0)*{\tilde{D}_4};(20,0)*{{\color{red}\bullet}};
\endxy

\xy (-10,0)*{};(-10,-10)*{};
@={(0,0),(10,0),(20,0),(30,0)}@@{*{\bullet}};
(0,0)*{};(20,0)*{}**\dir{-};(0,0)*{};(15,-7)*{\bullet}**\dir{-};(15,-7)*{};(30,0)*{}**\dir{-};
(10,0)*{\bigstar};(15,-7)*{\bigstar};(20,0)*{};(30,0)*{}**{\color{red}\dir{-}};
(100,0)*{e(F^\sigma) = 5};(-10,0)*{\tilde{A}_4};
\xy (-10,0)*{};(-10,-10)*{};
@={(30,0),(40,0),(50,0),(60,0)}@@{*{\bullet}};
(30,0)*{};(60,0)*{}**{\color{red}\dir{-}};(30,0)*{};(45,-7)*{\bullet}**{\color{red}\dir{-}};(45,-7)*{};(60,0)*{}**{\color{red}\dir{-}};(60,0)*{};(60,0)*{}**{\color{red}\dir{-}};
\endxy
\endxy

\xy (-10,0)*{};(-10,-10)*{};
@={(0,0),(10,0),(20,0)}@@{*{\bullet}};
(0,0)*{};(20,0)*{}**\dir{-};(0,0)*{};(10,-7)*{}**\dir{-};(10,-7)*{};(20,0)*{}**\dir{-};
(10,0)*{\bigstar};(10,-7)*{\bigstar};
(100,0)*{e(F^\sigma) = 4};(-10,0)*{\tilde{A}_3};
\xy (-10,0)*{};(-10,-10)*{};
@={(30,0),(40,0),(50,0)}@@{*{\bullet}};
(30,0)*{};(50,0)*{}**{\color{red}\dir{-}};(30,0)*{};(40,-7)*{\bullet}**{\color{red}\dir{-}};(40,-7)*{};(50,0)*{}**{\color{red}\dir{-}};(40,0)*{};(50,0)*{}**{\color{red}\dir{-}};
\endxy
\endxy

\xy (-10,0)*{};(-10,-10)*{};
@={(0,0),(10,0),(10,-7)}@@{*{\bullet}};
(0,0)*{};(10,0)*{}**\dir{-};(0,0)*{};(10,-7)*{}**\dir{-};(10,-7)*{};(10,0)*{}**{\color{red}\dir{-}};
(0,0)*{\bigstar};
(100,0)*{e(F^\sigma) = 3};(-10,0)*{\tilde{A}_2};
\xy (-10,0)*{};(-10,-10)*{};
@={(30,0),(40,0)}@@{*{\bullet}};
(30,0)*{};(40,0)*{}**{\color{red}\dir{-}};(30,0)*{};(40,-7)*{\bullet}**{\color{red}\dir{-}};(40,-7)*{};(40,0)*{}**{\color{red}\dir{-}};
\endxy
\endxy

\xy (-10,0)*{};(-10,-10)*{};
@={(0,0),(10,0)}@@{*{\bullet}};
(0,0)*{};(10,0)*{}**\dir2{-};
(0,0)*{\bigstar};
(100,0)*{e(F^\sigma) = 2};(-10,0)*{\tilde{A}_1};
\xy (-10,0)*{};(-10,-10)*{};
@={(30,0),(40,0)}@@{*{\bullet}};
(30,0.35)*{};(40,0.35)*{}**{\color{red}\dir{-}};(30,-0.35)*{};(40,-0.35)*{}**{\color{red}\dir{-}};
\endxy
\endxy
Also, if $F$ is of type $\tilde{A}_2^*(IV)$ (resp. $\tilde{A}_2^*(III)$, resp. $\tilde{A}_1^*(II)$, resp. $\tilde{A}_0(I_1)$, resp. 
$\tilde{A}^{**}(II)$), we obtain that  $F^\sigma$ consists of 4 (resp. 3, resp. 2, resp. 1) isolated fixed points. Observe that the case $\tilde{D}_5$ is missing. It  does not occur. The equality $e(F^\sigma) = e(F)$ is checked case by case.

\end{proof}

\begin{theorem}\label{thm4.4} Assume that $K_S \ne 0$.  A  cohomologically trivial automorphism $\sigma$  leaves invariant any genus one  fibration  and acts identically on its base.
\end{theorem}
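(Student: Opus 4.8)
The plan is to dispose of the invariance of the fibration immediately and then reduce the statement about the base to excluding a nontrivial involution of $\bbP^1$. Since $\sigma$ is cohomologically trivial it acts as the identity on $\Pic(S)$, so it fixes the class $2f$ of the pencil $|2F|$ defining any genus one fibration $\phi\colon S\to \bbP^1$. Hence $\sigma$ preserves $|2F|$, carries fibers to fibers, and induces an automorphism $\bar\sigma$ of the base with $\phi\circ\sigma=\bar\sigma\circ\phi$; this is the first assertion. By Theorem \ref{prop1} the order of $\sigma$ is $1$ or $2$, so I may assume $\sigma$ is an involution, in which case $\bar\sigma$ is either trivial (the desired conclusion) or an involution of $\bbP^1$ with exactly two fixed points. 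The whole task is to exclude the latter.

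Two sources of base fixed points are at hand. First, the half-fibers $F$ and $F'\sim F+K_S$ are the unique effective divisors in their classes $f$ and $f+K_S$, both of which $\sigma$ fixes in $\Pic(S)$; hence $\sigma(F)=F$, $\sigma(F')=F'$, and $\bar\sigma$ fixes $\phi(F),\phi(F')$. Second, every $(-2)$-curve $R$ satisfies $|R|=\{R\}$ and $\sigma$ fixes $[R]\in\Pic(S)$, so $\sigma(R)=R$; consequently $\sigma$ preserves every reducible fiber of $\phi$ (each contains a $(-2)$-curve component) and fixes its image on the base. Therefore, if $\phi$ has even one reducible fiber that is \emph{not} a half-fiber, $\bar\sigma$ acquires a third fixed point and must be the identity. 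This settles every fibration carrying a reducible non-multiple fiber, and reduces the problem to a fibration whose only reducible fibers, if any, are its two half-fibers.

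For that remaining configuration I assume $\bar\sigma\neq\id$ and exploit $\Lef(\sigma)=12$. Every fiber over a non-fixed point of $\bar\sigma$ is carried isomorphically onto a distinct fiber and so contains no fixed point; hence $S^\sigma\subseteq F_\red\cup F'_\red$. In the tame case $p\neq 2$, \eqref{lf2} gives $e(S^\sigma)=e(F_\red^\sigma)+e(F'^\sigma_\red)=12$, and since $\sigma$ fixes each component of the two half-fibers, Lemma \ref{lem4.3} yields $e(F_\red^\sigma)=e(F_\red)$ whenever a half-fiber is singular. Combining this with $e(F_\red)+e(F'_\red)\le e(S)=12$ and with the bound $\sum_v(m_v-1)\le \rank\Num(S)-1=9$ on the numbers $m_v$ of components of the fibers, I expect to eliminate most cases: if both half-fibers are reducible cycles $I_a,I_b$ then $e(S^\sigma)=a+b\le 11<12$; if both are irreducible then $e(S^\sigma)\le 8<12$; and the mixed subcases are cut down similarly. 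In the wild case $p=2$, Corollary \ref{corr} says $S^\sigma$ is a connected curve, which cannot be distributed over the disjoint pieces $F_\red$ and $F'_\red$, so it lies in a single half-fiber; Proposition \ref{prop2} then identifies it with (almost) all of that half-fiber, while the other half-fiber would carry no fixed point at all, an equally strong constraint to be exploited.

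The main obstacle is the handful of boundary configurations that these numerical and rank estimates do not immediately kill, namely those featuring a \emph{smooth elliptic} half-fiber (where Lemma \ref{lem4.3} fails and $\sigma$ may contribute up to four isolated fixed points in the tame case, or act as a fixed-point-free translation in the wild case) together with a large reducible half-fiber. To close these I plan to use once more that $\sigma$ fixes \emph{every} $(-2)$-curve: a nontrivial cohomologically trivial involution forces $S$ to be nodal, and I would produce a fixed $(-2)$-curve $R$ that is a multisection of $\phi$ — for instance a component of a reducible half-fiber of an auxiliary genus one fibration on which $\sigma$ fixes all fibers. Such an $R\cong\bbP^1$ meets $S\setminus(F_\red\cup F'_\red)$, yet the two fixed points of the involution $\sigma|_R$ must lie in $S^\sigma\subseteq F_\red\cup F'_\red$, which becomes impossible once $R$ is transverse enough to the fibration. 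Verifying that such a transverse fixed curve always exists — equivalently, that the two half-fibers cannot be the sole carrier of $S^\sigma$ when $\bar\sigma\neq\id$ — is the decisive technical point, and is where the distinction between the tame and wild cases, via Corollary \ref{corr} and Proposition \ref{prop2}, will do the final work.
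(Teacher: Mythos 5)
Your skeleton coincides with the paper's up to the point you yourself flag as unresolved, and that point is a genuine gap, not a routine verification. In the tame case, your estimates correctly kill the two-reducible-half-fibers case (Lemma \ref{lem4.3} plus the rank bound, exactly as in the paper) and the two-irreducible case, but the surviving mixed configuration --- one smooth elliptic half-fiber with $4$ fixed points and one half-fiber of type $\tilde{A}_7$ with $e=8$, totalling exactly $12$ --- is consistent with every constraint you have assembled, and your proposed remedy does not close it. If $R$ is a $\sigma$-fixed $(-2)$-curve that is a multisection of $\phi$, then $\phi\circ\sigma|_R=\bar\sigma\circ\phi|_R$ forces $\sigma|_R\neq\id$, so $\sigma|_R$ is a nontrivial involution of $R\cong\bbP^1$ whose two fixed points necessarily map to the two fixed points of $\bar\sigma$ on the base, i.e.\ they lie over $\phi(F)$ and $\phi(F')$ --- precisely on $R\cap(F_{\red}\cup F'_{\red})$. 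So no contradiction with $S^\sigma\subseteq F_{\red}\cup F'_{\red}$ can be extracted, no matter how ``transverse'' $R$ is; and the existence of such an $R$ was never established either. The idea you are missing is the paper's: pass to the quotient $S/(\sigma)$. Its smooth relatively minimal model is a rational elliptic surface whose singular fibers include one of type $\tilde{D}_4$ (the image of the smooth half-fiber with its four fixed points) and one of type $\tilde{A}_7$, and $e(\tilde{D}_4)+e(\tilde{A}_7)=6+8=14>12$ contradicts the Euler number of a rational elliptic surface.

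The characteristic $2$ case is also left unfinished in your write-up, but there the fault is the opposite one: you overlooked that your own observations already settle it. A nontrivial involution of $\bbP^1$ in characteristic $2$ has exactly \emph{one} fixed point, whereas you have shown $\bar\sigma$ fixes the two distinct points $\phi(F)\neq\phi(F')$ under the half-fibers (this is where $K_S\neq 0$ enters); that is the paper's one-line argument. Your detour through Corollary \ref{corr} and Proposition \ref{prop2} is unnecessary here --- those tools belong to the \emph{numerically} trivial, non-cohomologically-trivial analysis in the next section --- and in any case ``an equally strong constraint to be exploited'' is a promissory note, not a proof. Finally, a small loose end in the tame case: a smooth half-fiber on which $\sigma$ acts as a translation contributes $0$, forcing the other half-fiber to have $e=12$, i.e.\ twelve components; this is excluded by the same rank bound, but your ``cut down similarly'' should say so explicitly.
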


\begin{proof} The first assertion is obvious. Suppose $\sigma$ does not act identically on the base of a genus one fibration 
$\phi:S\to \bbP^1$. By assumption $K_S \ne 0$, hence  a genus one fibration has two half-fibers. Since $\sigma$ is cohomologically trivial, it fixes the two half-fibers $F_1$ and $F_2$ of $\phi$. Assume $p = 2$.  Since $\sigma$ acts on the base with only one fixed point, we get a contradiction. Assume $p\ne 2$. Then $\sigma$  has exactly two fixed points on the base. In particular, all non-multiple fibers must be irreducible, and the number of singular non-multiple fibers is even. 
 By Lefschetz fixed-point formula, we get
 $$e(S^\sigma) = e(F_1^\sigma)+e(F_2^\sigma) = 12.$$
 Since $p\ne 2$, $F_i$ is either smooth or of type $\tilde{A}_{n_i},i = 1,2 $. Suppose that $F_1$ and $F_2$ are singular fibers. Since $\sigma$ fixes any irreducible component of a fiber, Lemma \ref{lem4.3} implies that $e(F_i^\sigma) = e(F_i) = n_i$. So, we obtain that $n_1+n_2 = 12$. However, $F_1,F_2$ contribute $n_1+n_2-1$ to 
 the rank of the sublattice of $\Num(S)$ generated by components of fibers. The rank of this sublattice is at most $9$. This gives us a contradiction. Next we assume that one of the half-fibers is smooth. Then a smooth fiber has 4 fixed points, hence the other half-fiber must be of type $\tilde{A}_7$. It is easy to see that a smooth relatively minimal model of the quotient $S/(\sigma)$ has  singular fibers of type $\tilde{D}_4$ and $\tilde{A}_7$. Since the Euler characteristics of singular fibers add up to 12, this is impossible.

\end{proof}

\begin{remark} The assertion is probably true in the case when $S$ is not classical. However, I could prove only that $S$ admits at most one genus 1 fibration on which $\sigma$ does not act identically on the base. In this case $(S^\sigma)_\red$ is equal to the reduced half-fiber.

\end{remark}

We also have the converse assertion.
 
\begin{proposition}\label{enriques} Any numerically trivial automorphism $\sigma$ that  acts identically on the base of any genus one fibration is cohomologically trivial.
\end{proposition}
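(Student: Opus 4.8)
The plan is to package the failure of cohomological triviality into a single $\bbZ/2\bbZ$-valued homomorphism and show it vanishes. Since $\Aut_{\nt}(S) = \Aut_{\ct}(S)$ whenever $S$ is not classical (the Picard group is then torsion free), I may assume $S$ is classical, so that the torsion subgroup of $\Pic(S)$ is $\{0,K_S\}\cong\bbZ/2\bbZ$. For a numerically trivial $\sigma$ one has $\sigma^*D\equiv D\pmod{K_S}$ for every $D$, so $\epsilon:=\sigma^*-\id$ is a homomorphism $\Pic(S)\to (K_S)\cong\bbZ/2\bbZ$, and $\sigma$ is cohomologically trivial exactly when $\epsilon\equiv 0$. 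Because $\epsilon(K_S)=0$ (any automorphism fixes the canonical class) and $\epsilon$ kills $2\Pic(S)$, it descends to a homomorphism $\bar\epsilon\colon \Num(S)\otimes\bbZ/2\bbZ\to\bbZ/2\bbZ$, and it suffices to prove $\bar\epsilon=0$.

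Next I would exhibit classes fixed by $\sigma$ already in $\Pic(S)$, not merely in $\Num(S)$. First, every $(-2)$-curve $R$ is rigid and is the unique effective divisor in its numerical class; as $\sigma$ is numerically trivial it preserves $[R]$ and hence maps $R$ to $R$ as a divisor, so $\sigma^*[R]=[R]$ and $\epsilon([R])=0$. Second, let $f=[F]$ be a nef primitive isotropic class, with $F$ the half-fiber of the genus one fibration $\phi\colon S\to\bbP^1$ defined by $|2F|$. By hypothesis $\sigma$ acts identically on $\bbP^1$, so it preserves the scheme-theoretic fiber $\phi^*(p_0)=2F$ over the point $p_0$ carrying this double fiber; cancelling the multiplicity in the free abelian group of divisors gives $\sigma(F)=F$, whence $\sigma^*f=f$ and $\bar\epsilon(f)=0$.

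I would then invoke a canonical isotropic sequence $(f_1,\ldots,f_{10})$. Each $f_j$ is either one of the nef classes $f_{k_i}$ or satisfies $f_j=f_{k_i}+\calR_j$ with $\calR_j$ a sum of $(-2)$-curves. Lifting this identity to $\Pic(S)$ and applying $\epsilon$, the two computations above give $\bar\epsilon(f_j)=0$ for every $j$. The ten classes $f_j$ generate a sublattice of $\Num(S)$ of index $3$; since this index is odd, their images span $\Num(S)\otimes\bbZ/2\bbZ$. Hence $\bar\epsilon$ vanishes on a spanning set, so $\bar\epsilon=0$, $\epsilon=0$, and $\sigma$ is cohomologically trivial.

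The main obstacle is the second computation in the middle step: one must check that $\sigma$ acting identically on the base genuinely forces it to fix the half-fiber as a \emph{divisor}, not just numerically, since this is exactly where the hypothesis is consumed and where the lift from $\Num(S)$ to $\Pic(S)$ occurs. Once that, together with the rigidity of $(-2)$-curves, is secured, the remaining point is the elementary observation that an odd-index sublattice surjects onto $\Num(S)\otimes\bbZ/2\bbZ$.
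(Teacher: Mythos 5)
Your proof is correct, but it finishes by a genuinely different route than the paper's. Both arguments rest on the same two atomic computations: the hypothesis forces $\sigma$ to fix each half-fiber as a divisor (your cancellation argument $\sigma^*(2F)=2F$ in the free abelian group of divisors is exactly right, and this is indeed where the hypothesis is consumed), hence its class in $\Pic(S)$ and not merely in $\Num(S)$; and rigidity ($|R|=\{R\}$, $|R+K_S|=\emptyset$) forces $\sigma$ to fix the Picard class of every $(-2)$-curve. The difference is the generation step. The paper invokes Enriques's Reducibility Lemma (\cite{CD}, Corollary 3.2.2): every effective divisor is linearly equivalent to a sum of irreducible curves of arithmetic genus one (each a fiber or half-fiber of a genus one fibration) and smooth rational curves, so $\sigma^*$ fixes the class of every effective divisor, and these generate $\Pic(S)$. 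You instead reduce mod $2$: the obstruction $\epsilon=\sigma^*-\id$ takes values in $(K_S)\cong\bbZ/2\bbZ$, descends to a homomorphism on $\Num(S)\otimes\bbZ/2\bbZ$, and is killed on the ten members of a canonical isotropic sequence, which suffices because that sequence spans a sublattice of odd index $3$ and hence surjects onto $\Num(S)\otimes\bbZ/2\bbZ$. This trades the Reducibility Lemma for the isotropic-sequence machinery already set up in Section 2 plus an elementary odd-index observation; a small bonus of your route is that it needs only the unconditional existence of a canonical isotropic sequence (one nef member, $c\ge 1$), not the refined statement $c\ge 2$ of Theorem 3.4.1 of \cite{CD}, whose exception for extra $\tilde{E}_8$-special surfaces in characteristic $2$ the paper must handle separately elsewhere. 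The paper's proof is shorter given the Reducibility Lemma and controls all of $\Pic(S)$ directly through effective divisors; yours isolates more explicitly where the torsion class $K_S$ and the hypothesis enter, at the cost of the lattice-theoretic input.
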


This follows from Enriques's Reducibility Lemma \cite{CD},  Corollary 3.2.2.  It asserts that any effective divisor on $S$ is linearly equivalent to a sum of irreducible curves of arithmetic genus one and smooth rational curves. Since each irreducible curve of arithmetic genus one is realized as either a fiber of a half-fiber of a genus one fibration, its class is fixed by $\sigma$. Since $\sigma$  fixes also the class of a smooth rational curve, we obtain that it acts identically on the Picard group.


\section{Numerically trivial automorphisms}
Here we will be interested in the group $\Aut_{\nt}(S)/\Aut_{\ct}(S)$. Since $\Num(S)$ coincides with  $ \Pic(S)$ for a non-classical Enriques surface $S$, we may assume that $K_S \ne 0$.

Let $\Or(\NS(S))$ be the group of automorphisms of the abelian group $\NS(S)$ preserving the intersection product. It follows from the elementary theory of abelian groups that 
$$\Or(\NS(S)) \cong (\bbZ/2\bbZ)^{10}\rtimes \Or(\Num(S)).$$
Thus 
\beq\label{elem}
\Aut_{\nt}(S)/\Aut_{\ct}(S)  \cong (\bbZ/2\bbZ)^a.
\eeq

The following theorem extends the second assertion of Theorem \ref{thmmukai} to arbitrary characteristic.

\begin{theorem}  
$$\Aut_{\nt}(S)/\Aut_{\ct}(S) \cong (\bbZ/2\bbZ)^a, \ a\le 1.$$
\end{theorem}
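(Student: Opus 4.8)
The plan is to reduce the assertion $a\le 1$ to a bound on a single ``defect homomorphism,'' and then to read that defect off the action of $\sigma$ on genus one fibrations. Since \eqref{elem} already gives $\Aut_{\nt}(S)/\Aut_{\ct}(S)\cong(\bbZ/2\bbZ)^a$, only $a\le 1$ remains, and by the opening remark of this section we may assume $K_S\ne 0$. For $\sigma\in\Aut_{\nt}(S)$ and $D\in\Pic(S)$ the class $\sigma^*D-D$ lies in the torsion subgroup $\langle K_S\rangle\cong\bbZ/2\bbZ$, and since $\sigma^*K_S=K_S$ the assignment $D\mapsto \sigma^*D-D$ descends to a homomorphism $\phi_\sigma\colon\Num(S)\to\bbZ/2\bbZ$. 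A direct check (using that $\sigma^*$ fixes $K_S$) shows $\sigma\mapsto\phi_\sigma$ is a homomorphism with kernel exactly $\Aut_{\ct}(S)$, so it embeds $Q:=\Aut_{\nt}(S)/\Aut_{\ct}(S)$ into $\Hom(\Num(S),\bbZ/2\bbZ)\cong \Num(S)/2\Num(S)$, the last isomorphism because $\Num(S)\cong E_{10}$ is unimodular. Thus it suffices to prove that the image of $Q$ has $\bbF_2$-dimension at most $1$.

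Next I would interpret $\phi_\sigma$ geometrically. For a primitive isotropic nef class $f=[F]$ with half-fibers $F$ and $F'\sim F+K_S$, one has $\phi_\sigma(f)=0$ precisely when $\sigma^*F=F$, i.e. when $\sigma$ preserves each half-fiber, while $\phi_\sigma(f)\ne 0$ means $\sigma$ interchanges $F$ and $F'$, hence interchanges the two multiple fibers $2F,2F'$ and therefore acts on the base $\bbP^1$ of $|2F|$ as a non-trivial involution swapping the two points lying under the multiple fibers. Taking a canonical isotropic sequence $(f_1,\dots,f_{10})$, which generates a sublattice of odd index $3$ and so spans $\Num(S)/2\Num(S)$, the class $\phi_\sigma$ is completely determined by the set of those ten fibrations $|2F_i|$ whose half-fibers $\sigma$ interchanges; in particular $\sigma\in\Aut_{\ct}(S)$ iff $\sigma$ interchanges the half-fibers of none of them.

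With this dictionary in place, the core of the argument is to suppose $a\ge 2$ and derive a contradiction. Choosing $\sigma_1,\sigma_2\in\Aut_{\nt}(S)$ whose defect classes $v_1,v_2\in\Num(S)/2\Num(S)$ are $\bbF_2$-independent (so that $\sigma_1$, $\sigma_2$ and $\sigma_1\sigma_2$ are all non-cohomologically-trivial), independence lets me pick fiber classes on which the swapping patterns of $\sigma_1$ and $\sigma_2$ differ, producing genus one fibrations on which one of the two interchanges the multiple fibers and the other does not. I would then play these fibrations against one another using three inputs already available: the structure of the automorphism group of the geometric generic fiber, exactly as in the proof of Theorem \ref{prop1} (an automorphism fixing sufficiently many degree-two points must be trivial), which controls the subgroup acting trivially on a chosen base; the fact that the image of $\langle\sigma_1,\sigma_2\rangle$ in $\Aut(\bbP^1)=\PGL_2(\Bbbk)$ preserves the two-point set of multiple-fiber images and hence, being a $2$-group, lands in the stabilizer $\bbG_m\rtimes\bbZ/2\bbZ$ of that set; and the Lefschetz number $\Lef(\sigma)=12$, valid for every numerically trivial involution since it acts trivially on $H^2_{\et}(S,\bbQ_\ell)$, which together with Lemma \ref{lem4.3} and Corollary \ref{corr} pins down the fixed loci on the invariant fibers. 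The upshot I would aim for is that two independent swapping patterns force incompatible fixed-point configurations, or too many fixed points on a single geometric generic fiber, contradicting the elliptic-curve bound; hence $\dim_{\bbF_2}\im Q\le 1$ and $a\le 1$.

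The main obstacle is the decoupling of the action on the base from the action along the fibers. A numerically trivial $\sigma$ may fix both multiple fibers of a given fibration, so that $\phi_\sigma(f)=0$ there, and yet act non-trivially on that base (for instance as $z\mapsto -z$ when $p\ne 2$); consequently no single fibration detects $Q$ faithfully, and the argument genuinely needs several fibrations from the canonical isotropic sequence to be compared simultaneously. This scaling phenomenon is precisely what makes the tame case $p\ne 2$ delicate---there one may also fall back on Serre's lifting theorem together with the complex classification as a safeguard---whereas in characteristic $2$ the group $\bbG_m$ carries no $2$-torsion, the spurious base-scalings disappear, and the swapping patterns become rigid enough to force $a\le 1$ directly.
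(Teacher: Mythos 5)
Your formal scaffolding is correct but it is not where the theorem lives. The defect map $\phi_\sigma(D)=\sigma^*D-D$ does give an embedding of $Q=\Aut_{\nt}(S)/\Aut_{\ct}(S)$ into $\Hom(\Num(S),\bbZ/2\bbZ)$, and the dictionary ``$\phi_\sigma(f)\ne 0$ iff $\sigma$ swaps the two half-fibers of $|2F|$'' is valid (half-fibers and $(-2)$-curves are rigid, so classes in $\Pic(S)$ determine the curves). But this merely repackages \eqref{elem}, which the paper already takes as given before the theorem; the entire content of the statement is $a\le 1$, and precisely there your text stops being a proof. The contradiction from $a\ge 2$ is never derived: ``I would then play these fibrations against one another,'' ``the upshot I would aim for,'' and ``the swapping patterns become rigid enough to force $a\le 1$ directly'' are declarations of intent, not arguments. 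Your own closing paragraph concedes the problem: for $p\ne 2$ an element with $\phi_\sigma(f)=0$ on every $f$ from the isotropic sequence can still act non-trivially on a base, so the swapping dictionary has no mechanism by itself to separate or collide two independent classes $v_1,v_2$. (The char-$2$ rigidity you invoke --- an involution of $\bbP^1$ fixing two points is the identity --- is true, but it only tells you that a non-cohomologically-trivial element swaps the half-fibers of some fibration; it does not rule out two independent such elements.)

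What actually closes the argument, and what the paper's proof consists of, is a fixed-point analysis that your plan lists as ``inputs'' but never assembles. For $p\ne 2$: Proposition \ref{enriques} produces a fibration whose base $\sigma$ moves; $\sigma$ then has exactly two invariant fibers $F_1,F_2$, all other fibers are irreducible, and $\Lef(\sigma)=12$ combined with Lemma \ref{lem4.3} forces one invariant fiber to be reducible; any further element $\sigma'$ must preserve both $F_1,F_2$ (it cannot exchange a reducible fiber with an irreducible one), and comparing the possible fixed-point configurations on the reducible fiber shows that $\sigma'$ or $\sigma\circ\sigma'$ fixes the same components pointwise as $\sigma$, hence acts identically on a curve; smoothness of the fixed locus of a tame involution then forces it to be the identity, i.e.\ $\sigma'\in\langle\sigma\rangle$. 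For $p=2$ --- the only case where something beyond lifting is genuinely needed, since the automorphisms are wild --- the paper uses Corollary \ref{corr} ($S^{\sigma_i}$ is a connected curve $C_i$ inside the unique $\sigma_i$-invariant fiber of $\phi_i$) and plays $\phi_1$ against $\phi_2$: if each $C_i$ meets the general fibers of the other fibration, then $\sigma_1$ acts trivially on the base of $\phi_2$ and vice versa, and the product $\sigma_1\sigma_2$ then moves two bases while the same reasoning allows it to move only one --- contradiction; otherwise $C_1$ lies in a fiber of $\phi_2$ and Proposition \ref{prop2} finishes. None of this appears in your proposal. Finally, your fallback for $p\ne 2$ (Serre lifting plus the complex classification) is legitimate --- the paper notes in Section 2 that it applies, since $\Aut_{\nt}(S)$ is a finite $2$-group by \eqref{elem} and Theorem \ref{prop1} --- but you do not carry it out either, and it says nothing in characteristic $2$, which is the case this theorem is really about.
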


\begin{proof}

Assume first that $p\ne 2$.
Let $\sigma \in  \Aut_{\nt}(S)\setminus \Aut_{\ct}(S)$. By Proposition \ref{enriques}, there exists a genus one fibration $\phi:S\to \bbP^1$ such that  $\sigma$  acts non-trivially on its  base. Since $p\ne 2$, $\sigma$ has two fixed points on the base.  Let $F_1$ and $F_2$ be the fibers over these points. Obviously, $\sigma$ must leave invariant any reducible fiber, hence all fibers $F\ne F_1,F_2$ are irreducible. On the other hand, the Lefschetz fixed-point formula shows that one of the fixed fibers must be reducible. Let $G$ be the 
cyclic group generated by $(\sigma)$. Assume there is $\sigma'\in \Aut_{\nt}(S)\setminus G$. Since $\Aut_{nt}(S)/\Aut_{ct}(S)$ is an elementary $2$-group, the actions of $\sigma'$ and $\sigma$ on the base of the fibration commute. Thus $\sigma'$ either switches $F_1,F_2$  or it leaves them invariant. Since one of the fibers is reducible,  $\sigma'$ must fix both fibers.  We may assume that $F_1$ is reducible. By looking at all possible structure of the locus of fixed points containing in a fiber (see the proof of Lemma \ref{lem4.3}), we find that $\sigma$ and $\sigma'$ (or $\sigma\circ \sigma'$) fixes pointwisely the same set of irreducible components of $F_1$. Thus $\sigma\circ \sigma'$ (or $\sigma'$) acts identically on $F_1$. Since the set of fixed points is smooth, we get a contradiction with the assumption that $\sigma'\ne \sigma$.

Next we deal with the case $p = 2$. Suppose the assertion is not true. Let $\sigma_1,\sigma_2$ be two representatives of  non-trivial cosets 
in $\Aut_{\nt}(S)/\Aut_{\ct}(S)$. Let $\phi_i$ be a genus one fibration such that $\sigma_i$ does not act identically on its base.  Since, we are in characteristic 2, $\sigma_i$ has only one fixed point on the base. 
Let  $F_i$ be the unique fiber of $\phi_i$ fixed by $\sigma_i$. By Corollary \ref{corr},  $S^{\sigma_i}$ is connected curve $C_i$ contained in $(F_i)_{\red}$. Suppose $C_1$ is not contained in fibres of $\phi_2$ and $C_2$ is not contained in fibres of $\phi_1$. Then  a general fiber of $\phi_i$  intersects $C_j$, it has a fixed point on it. This implies that $\sigma_1$ acts identically on the base of $\phi_2$ and $\sigma_2$ acts identically on the base of $\phi_1$. Let $\sigma = \sigma_1\circ \sigma_2$. Then $\sigma\not \in \Aut_{ct}(S)$, hence repeating the argument, we obtain that $\sigma$ acts identically on all genus one fibrations except one. However, $ \sigma$ acts non-identically on the base of $\phi_1$ and on the base of $\phi_2$.  This contradiction proves the assertion. Now, if $C_1$ is contained in a fibre $F_2$ of $\phi_2$, then $\sigma_1$ leaves $F_2$ invariant, and, applying Proposition \ref{prop2}, we easily obtain that $S^{\sigma} = F_2$. Replacing $\sigma_1$ with $\sigma$, and repeating the argument we get the assertion.

\end{proof}

\section{Examples} In this section we assume that $p \ne 2$.

\begin{example} Let us see that the case when $\Aut_{\nt}(S)\ne \Aut_{\ct}(S)$ is realized. Consider $X = \bbP^1\times \bbP^1$ with two projections $p_1,p_2$ onto $\bbP^1$.   Choose two smooth rational curves $R$ and $R'$ of bidegree $(1,2)$ such that the restriction of $p_1$ to each of these curves is a finite map of degree two. Assume that $R$ is tangent to $R'$ at two points $x_1$ and $x_2$ with 
tangent directions corresponding to the fibers $L_1,L_2$ of $p_1$ passing through these points. Counting parameters, it is easy to see that this can be always achieved.  Let $x_1',x_2'$ be the points infinitely near $x_1, x_2$ corresponding to the tangent directions. Let $L_3,L_4$ be two fibers of $p_1$ different from $L_1$ and $L_2$. Let 
$$R \cap L_3 = \{x_{3},x_{4}\},\  R \cap L_4 = \{x_5,x_6\},\  R' \cap L_3 = \{x_{3}',x_{4}'\},\  R' \cap L_4 = \{x_5',x_6'\}.$$ 
We assume that all the points are distinct.  Let $b:X'\to X$ be the blow-up of the points $x_1,\ldots,x_6,x_1',\ldots,x_6'$.
Let $R_i,R_i'$ be the corresponding exceptional curves, $\bar{L}_i,\bar{R},\bar{R}'
$ be the proper transforms of $L_i, R,R'$. We have
$$D=\bar{R}+\bar{R}'+\sum_{i=1}^4\bar{L}_i+R_1+R_2 $$
$$\sim 2b^*(3f_1+2f_2)-2\sum_{i=1}^6(R_i+R_i')-4(R_1'+R_2'),$$
where $f_i$ is the divisor class of a fiber of the projection $p_i:X\to \bbP^1$. Since the divisor class of $D$ is divisible by 2 in the Picard group, we can construct a double cover  $\pi:S'\to X'$ branched over 
$D$. We have 
$$K_{X'} = b^*(-2f_1-2f_2)+\sum_{i=1}^6(R_i+R_i')+R_1'+R_2',$$
hence
$$K_{S'} = \pi^*(K_{X'}+\half D) =  (b\circ\pi)^*(f_1-R_1'-R_2').$$
We have $\bar{L}_1^2 = \bar{L}_2^2 = R_1^2 = R_2^2 = -2$, hence  
$\pi^*(\bar{L}_i) = 2A_i, i = 1,2,$ and  $ \pi^*(R_i) = 2B_i, i = 1,2,$
where 
$A_1,A_2,B_1,B_2$ are $(-1)$-curves. Also
$ \bar{R}^2 = \bar{R'}^2 = \bar{L}_3^2 = \bar{L}_4^2 = -4$, hence
$\pi^*(\bar{R}) = 2\tilde{R}, \pi^*(\bar{R'}) = 2\tilde{R'}, \pi^*(\bar{L}_3) = 2\tilde{L_3}, \pi^*(\bar{L}_4) = 2\tilde{L_4}$
where $\tilde{R},\ \tilde{R}',\  \tilde{L_3},\  \tilde{L_4}$ are $(-2)$-curves. The curves  $\bar{R}_i = \pi^*(R_i),\  \bar{R}_i' = \pi^*(R_i'), i = 3,4,5,6,$ are $(-2)$-curves. 
 The preimages of  the curves $R_1'$ and $R_2'$ are elliptic curves $F_1',F_2'$. Let
$\alpha:S'\to S$ be the blowing down of the curves $A_1,A_2,B_1,B_2$. Then the  preimage of the fibration $p_1:X\to \bbP^1$ on $S$ is an 
elliptic fibration with double fibers $2F_1,2F_2$, where $F_i = \alpha(F_i')$. 
 We have $K_S = 2F_1-F_1-F_2 = F_1-F_2$. So, $S$ is an Enriques surface with rational double cover $S\dasharrow \bbP^1\times \bbP^1$. The elliptic fibration has two fibers of types $\tilde{D}_4$ over $L_3,L_4$ and two double fibers over $L_1$ and $L_2$.

The following diagram pictures a configuration of curves on $S$.

\xy (-40,25)*{};(-30,-30)*{};
@={(0,0),(0,15),(15,15),(30,15),(30,0),(30,-15), (15,-15),(0,-15),(7.5,7.5),(7.5,-7.5),(22.5,7.5),(22.5,-7.5)}@@{*{\bullet}};
(0,15)*{};(0,-15)*{}**\dir{-};(0,15)*{};(30,15)*{}**\dir{-};(30,15)*{};(30,-15)*{}**\dir{-};(30,-15)*{};(0,-15)*{}**\dir{-};
(0,0)*{};(15,15)*{}**\dir{-};(15,15)*{};(30,0)*{}**\dir{-};(30,0)*{};(15,-15)*{}**\dir{-};(15,-15)*{};(0,0)*{}**\dir{-};
(-3,0)*{\tilde{L}_3};(-3,18)*{\bar{R}_3};(15,18)*{\tilde{R}};(30,18)*{\bar{R}_3'};(33,0)*{\tilde{L}_4'};(4.5,9.5)*{\bar{R}_4};(25,9.5)*{\bar{R}_4'};(4.5,-9.5)*{\bar{R}_6};(15,-18)*{\tilde{R}'};(-3,-15)*{\bar{R}_5};(33,-15)*{\bar{R}_5'};(25,-9.5)*{\bar{R}_6'}
\endxy
Let us see that the cover automorphism is numerically trivial but not cohomologically trivial (see other treatment of this example  
in \cite{Mukai}). Consider the  pencil of curves of bidegree $(4,4)$ on $X$ generated by the curve $G = R+R'+L_3+L_4$ and $2C$, where $C$ is a unique curve of bidegree $(2,2)$ passing through the points $x_4,x_4',x_6,x_6',x_1,x_1',x_2,x_2'$.  These points are the double base points of the pencil. It is easy to see that this pencil defines an elliptic fibration  on $S$ with a double fiber of type $\tilde{A}_7$ formed by the curves 
$\bar{R}_3,\tilde{L}_3,\bar{R}_5,\tilde{R}',\bar{R}_5',\tilde{L}_4,\bar{R}_3',\tilde{R}$ and the double fiber $2\bar{C}$, where $\bar{C}$ is the preimage of $C$ on $S$. If $g= 0, f = 0$ are local equations of the curves $G$ and $C$, the local equation of a general member of the pencil is 
$g+\mu f^2 = 0$, and the local equation of the double cover $S\dasharrow X$ is $g = z^2$.  It clear that the pencil splits. By Proposition \ref{enriques} the automorphism is not cohomologically trivial.

Note that the K3-cover of $S$ has four singular fibers of type $\tilde{D}_4$. It is a Kummer surface of the product of two elliptic curves. This is the first example of a numerically trivial automorphism due to David  Lieberman (see \cite{MN}, Example 1). Over $\bbC$, a special case of this surface belongs to Kond\={o}'s list of complex Enriques surfaces with finite automorphism group \cite{Kondo}. It is a surface of type III. It admits five elliptic fibrations
of types
$$\tilde{D}_8,\  \tilde{D}_4+\tilde{D}_4, \ \tilde{D}_6+\tilde{A}_1+\tilde{A}_1,\ \tilde{A}_7+\tilde{A}_1, \tilde{A}_3+\tilde{A}_3+\tilde{A}_1+\tilde{A}_1.$$
\end{example}

\begin{example} Let $X = \bbP^1\times \bbP^1$ be as in the previous example.  Let $R'$ be a curve of bidegree $(3,4)$ on $X$ such that the degree of $p_1$ restricted to $R'$ is equal to 4. It is a curve of arithmetic genus 6. Choose three fibers of  $L_1,L_2,L_3$ of the first projection and points $x_i\in L_i$ on it no two of which lie on a fiber of the second projection. Let $x_i'\succ x_i$ be the point infinitely near $x_i$ in the tangent directions defined by the fiber $L_i$. We require that $R'$ has double points at $x_1,x_2,x_2',x_3,x_3'$ and a simple point at $x_1'$ (in particular $R'$ has a cusp at $x_1$ and has tacnodes at $x_2, x_3$).  The dimension of the linear 
system  of curves of bidegree $(3,4)$ is equal to $19$. We need 5 conditions  to have a cusp at $x_1$ as above, and 6 conditions for each tacnode. So, we 
can always find $R'$. 

Consider the double cover $\pi:Y \to X$ branched over $R'+L_1+L_2+L_3$. It has a double rational point of type $E_8$ over $x_1$ and simple 
elliptic singularities of degree 2 over $x_2,x_3$. Let $r:S'\to Y$ be a minimal resolution of singularities. The composition 
$f'= p_1\circ r\circ \pi:S'\to \bbP^1$ is a non-minimal  elliptic fibration on $S'$. It has a fiber $F_1'$ of type $\tilde{E}_8$ over $L_1$. The preimage of $L_2$ (resp. $L_2$) is the union of an elliptic curve $F_2'$ (resp. $F_3'$) and two disjoint $(-1)$-curves  $A_2,A_2'$ (resp. $A_3,A_3'$), all taken  with multiplicity $2$. Let $S'\to S$ be the blow-down of the curves $A_2,A_2',A_3,A_3'$. It is easy to check that $S$ is an Enriques surface with a fiber $F_1$ of type $\tilde{E}_8$ and two half-fibers $F_2, F_3$, the images of $F_2',F_3'$.

The following picture describes the incidence graph of irreducible components of $F_1$.

\xy (-30,10)*{};(-30,-10)*{};
@={(0,0),(10,0),(20,0),(30,0),(40,0),(50,0),(60,0),(70,0)}@@{*{\bullet}};
(0,0)*{};(70,0)*{}**\dir{-};(20,0)*{};(20,-7)*{\bullet}**\dir{-};
(23,-5)*{R_1};(0,3)*{R_2};(10,3)*{R_3};(20,3)*{R_4};(30,3)*{R_5};(40,3)*{R_6};(50,3)*{R_7};(60,3)*{R_8};(70,3)*{R_9};
\endxy

\noindent
Under the composition of rational maps $\pi:S\dasharrow S'\to Y\to X$, the image of 
the component $R_8$ is equal to $L_1$, the image of  the component $R_9$ is the intersection point $x_0\ne x_1$ of the curves $R'$ and  $L_1$. Let
$\sigma$ be the deck transformation of the cover $\pi$ (it extends to a biregular automorphism because $S$ is a minimal surface). 

Consider a curve $C$ on $X$ of bidegree $(1,2)$ that  passes through the points $x_1,x_2,x_2',x_3,x_3'$. The dimension of the linear system of curves of bidegree $(1,2)$ is equal to $5$. We have five condition for $C$ that  we can satisfy. The proper transform of $C$ on $S$ is a $(-2)$-curve $R_0$ that intersects the components $R_8$ and $R_2$. We have the  graph  which is contained in the incidence graph of $(-2)$-curves on $S$:

\beq\label{d1}
\xy (-10,25)*{};(-10,-30)*{};
@={(0,0),(10,10),(20,20),(30,10),(40,0),(30,-10), (20,-20),(10,-10),(10,0),(30,0)}@@{*{\bullet}};
(0,0)*{};(20,20)*{}**\dir{-};(20,20)*{};(40,0)*{}**\dir{-};(40,0)*{};(20,-20)*{}**\dir{-};(20,-20)*{};(20,-20),(0,0)*{}**\dir{-};
(0,0)*{};(10,0)*{}**\dir{-};(30,0)*{};(40,0)*{}**\dir{-};
(-3,0)*{R_4};(11,3)*{R_1};(7,12)*{R_3};(17,22)*{R_2};(7,-10)*{R_5};(17,-20)*{R_6};(33,-10)*{R_7};(43,0)*{R_8};(32,3)*{R_9};(32,12)*{R_0};
\endxy
\eeq
 
 One computes the determinant of the intersection matrix $(R_i\cdot R_j))$ and obtains that it is equal to $-4$. This shows that the curves $R_0,\ldots,R_9$ generate a sublattice  of index 2 of the lattice $\Num(S)$. The class of the half-fiber $F_2$ does not belong to this sublattice, but $2F_2$  belongs to it. This shows that the numerical classes $[F_2],[R_0],\ldots,[R_9]$ generate $\Num(S)$. We also have a section $s:\Num(S)\to \Pic(S)$ of the projection $\Pic(S)\to \Num(S) = \Pic(S)/(K_S)$ defined by sending $[R_i]$ to $R_i$ and $[F_2]$ to $F_2$. Since the divisor classes $R_i$ and $F_2$ are $\sigma$-invariant, we obtain that $\Pic(S) = K_S\oplus s(\Num(S))$, where the both summands are $\sigma$-invariant. This shows that $\sigma$ acts identically on $\Pic(S)$, and, by definition, belongs to $\Aut_{\ct}(S)$.
\end{example} 
 
\begin{remark} In fact, we have proven the following fact. Let $S$ be an Enriques surface such that  the incidence graph of $(-2)$-curves on it contains the  subgraph \eqref{d1}. Assume that $S$   admits an involution $\sigma$ that acts identically on the subgraph and leaves invariant the two half-fibers of the elliptic fibration defined by a subdiagram of type $\tilde{E}_8$. Then $\sigma\in \Aut_{\ct}(S)$.  The first example of such a pair $(S,\sigma)$ was constructed in \cite{Dolgachev}. The surface has additional $(-2)$-curves $R_1'$ and $R_9'$ forming the following graph.

\beq\label{d2}
\xy (-10,25)*{};(-10,-30)*{};
@={(0,0),(10,10),(20,20),(30,10),(40,0),(30,-10), (20,-20),(10,-10),(10,0),(30,0)}@@{*{\bullet}};
(0,0)*{};(20,20)*{}**\dir{-};(20,20)*{};(40,0)*{}**\dir{-};(40,0)*{};(20,-20)*{}**\dir{-};(20,-20)*{};(20,-20),(0,0)*{}**\dir{-};
(0,0)*{};(10,0)*{}**\dir{-};(30,0)*{};(40,0)*{}**\dir{-};
(-3,0)*{R_4};(11,3)*{R_1};(17,3)*{R_1'};(7,12)*{R_3};(17,22)*{R_2};(7,-10)*{R_5};(17,-20)*{R_6};(33,-10)*{R_7};(43,0)*{R_8};(32,3)*{R_9};
(24,3)*{R_9'};(32,12)*{R_0};(10,0)*{};(17,0)*{}**\dir2{-};(23,0)*{};(30,0)*{}**\dir2{-};(17,0)*{};(23,0)*{}**\dir2{-};
(23,0)*{\bullet};(17,0)*{\bullet};
\endxy
\eeq
All smooth rational curves are accounted in this diagram. The surface has a finite automorphism group isomorphic to the dihedral group $D_4$ of order 4. It is a surface of type I in Kond\={o}'s list. The existence of an Enriques surface containing the diagram \eqref{d2} was first shown by E. Horikawa \cite{Horikawa}. Another construction of pairs $(S,\sigma)$ as above was given in \cite{MN} (the paper has  no reference to the paper \cite{Dolgachev} that had appeared in the previous issue of the same journal). 

 Observe now that in the  diagram \eqref{d1} the curves $R_0,\ldots,R_7$ form a nef isotropic effective divisor $F_0$ of type $\tilde{E}_7$. The curve $R_9$ does not intersect it. This implies that the genus one fibration defined by the pencil $|F_0|$ has a reducible fiber with one of its irreducible components equal to $R_9$. Since the sum of the Euler characteristics of fibers add up to $12$, we obtain that the fibration has a 
reducible fiber or a half-fiber of type $\tilde{A}_1$. Let $R_9'$ be its another irreducible component. Similarly, we consider the genus one fibration with fiber $R_0,R_2,R_3,R_5,\ldots,R_9$ of type $\tilde{E}_7$. It has another fiber (or a half-fiber) of type $\tilde{A}_1$ formed by $R_1$ and some other $(-2)$-curve $R_1'$. 

So any surface $S$ containing the configuration of curves from \eqref{d1} must contain a configuration of curves described by the following diagram.

\beq\label{d3}
\xy (-10,25)*{};(-10,-30)*{};
@={(0,0),(10,10),(20,20),(30,10),(40,0),(30,-10), (20,-20),(10,-10),(10,0),(30,0)}@@{*{\bullet}};
(0,0)*{};(20,20)*{}**\dir{-};(20,20)*{};(40,0)*{}**\dir{-};(40,0)*{};(20,-20)*{}**\dir{-};(20,-20)*{};(20,-20),(0,0)*{}**\dir{-};
(0,0)*{};(10,0)*{}**\dir{-};(30,0)*{};(40,0)*{}**\dir{-};
(-3,0)*{R_4};(11,3)*{R_1};(17,3)*{R_1'};(7,12)*{R_3};(17,22)*{R_2};(7,-10)*{R_5};(17,-20)*{R_6};(33,-10)*{R_7};(43,0)*{R_8};(32,3)*{R_9};
(24,3)*{R_9'};(32,12)*{R_0};(10,0)*{};(17,0)*{}**\dir2{-};(23,0)*{};(30,0)*{}**\dir2{-};(23,0)*{\bullet};(17,0)*{\bullet};
\endxy
\eeq

Note that our  surfaces $S$ depend on 2 parameters. A general surface from the family is different from the Horikawa surface. For a general $S$, the curve $R_9'$ originates from a rational curve $Q$ of bidegree $(1,2)$ on $X$ that passes through the points $x_0$ and $x_2,x_2',x_3,x_3'$. It intersects $R_8$ with multiplicity 1. The curve $R_1'$ originates from a rational curve $Q'$ of bidegree $(5,6)$ of arithmetic genus 20 which has a 4-tuple point at $x_1$ and two double points infinitely near $x_1$. It also has four triple points at $x_2,x_2',x_3,x_3'$. It intersects $R_4$ with multiplicity 1. In the special case when one of the points $x_2$ or $x_3$ is contained in a  curve $(0,1)$  $Q_0$ of bidegree $(0,1)$ containing $x_0$, the curve $Q$ becomes reducible, its component $Q_0$ defines the curve $R_9'$ that   does not intersect $R_8$.  Moreover, if there exists a curve $Q_0'$ of bidegree $(2,3)$ which has  multiplicity 2 ar $x_2$, multiplicity  1 at $x_2',x_3,x_4$, and has a cusp at $x_1$ intersecting $R'$ at this point with multiplicity 7, then $Q_0'$ will define a curve $R_1'$ that does not intersect $R_4$. The two curves $R_1'$ and $R_9'$ will intersect at two points on the half-fibers of the elliptic fibration $|2F|$. This gives us the Horikawa surface.

\end{remark}

\begin{example} Let $\phi:X\to \bbP^1$ be a rational elliptic surface with reducible fiber $F_1$ of type IV and $F_2$ of type $I_0^* = \tilde{D}_4$ and one double fiber $2F$. The existence of such surface follows from the existence of a rational elliptic surface with a section with the same types of reducible fibers. Consider the double cover $X'\to X$ branched over $F_1$ and the union of the components of $F_2$ of multiplicity 1. It is easy to see that $X'$ is birationally equivalent to an Enriques surface with a fiber of type $\tilde{E}_6$ over $F_1$ and a smooth elliptic curve over $F_2$. The locus of fixed points of the deck transformation $\sigma$ consists of four components of the fiber of type $\tilde{E}_6$ and four isolated points on the smooth fiber. Thus the Lefschetz number is equal to 12 and $\sigma$ is numerically trivial. Over $\bbC$, this is Example 1 from \cite{Mukai} which was overlooked in \cite{MN}. A special case of this example can be found in \cite{Kondo}. It is realized on a surface of type V in Kond\={o}'s  list of Enriques surfaces with finite automorphism group.

\end{example}

\section{Extra special Enriques surfaces} In this section we will give  examples of cohomologically trivial automorphisms that appear only in characteristic 2.

An Enriques surface  is called \emph{extra special} if there exists a root basis  $B$ in $\Num(S)$ of cardinality $\le 11$ that consists of the classes of 
$(-2)$-curves such that the reflection 
subgroup $G$ generated by $B$  is of finite index in the orthogonal group of $\Num(S)$. Such a root basis was called 
\emph{crystallographic} in \cite{CD}. We additionally assume that no two curves intersect with multiplicity $> 2$. By a theorem of E. Vinberg \cite{Vinberg}, 
this is  possible if and only if the Coxeter diagram of the Coxeter group $(G,B)$ has the property that each affine subdiagram is 
contained in an affine diagram, not necessary connected, of maximal possible rank (in our case equal to 8).

One can easily classify extra special Enriques surfaces. They are of the following three kinds.

An extra $\tilde{E}_8$-special surface with the crystallographic basis of $(-2)$-curves described by the following diagram:

 \xy (-10,10)*{};(-30,-10)*{};
@={(0,0),(10,0),(20,0),(30,0),(40,0),(50,0),(60,0),(70,0),(80,0)}@@{*{\bullet}};
(0,0)*{};(80,0)*{}**\dir{-};(20,0)*{};(20,-7)*{\bullet}**\dir{-};
(23,-5)*{R_1};(0,3)*{R_2};(10,3)*{R_3};(20,3)*{R_4};(30,3)*{R_5};(40,3)*{R_6};(50,3)*{R_7};(60,3)*{R_8};(70,3)*{R_9};(80,3)*{C};
\endxy

It has a genus one fibration with a half-fiber of type 
$\tilde{E}_8$ with irreducible components $R_1,\ldots,R_9$ and a smooth rational 2-section $C$.

An extra $\tilde{D}_8$-special surface with the crystallographic basis of $(-2)$-curves described by the following diagram:

 \xy (-10,10)*{};(-30,-10)*{};
@={(0,0),(10,0),(20,0),(30,0),(40,0),(50,0),(60,0),(10,-7),(50,-7),(70,0)}@@{*{\bullet}};
(0,0)*{};(70,0)*{}**\dir{-};(10,0)*{};(10,-7)*{}**\dir{-};(50,0)*{};(50,-7)*{}**\dir{-};
(13,-7)*{R_1};(0,3)*{R_3};(10,3)*{R_4};(20,3)*{R_5};(30,3)*{R_6};(50,3)*{R_8};(60,3)*{R_9};(70,3)*{C};
(40,3)*{R_7};(53,-7)*{R_2};
\endxy

It has a genus one fibration with a half-fiber of type $\tilde{D}_8$ with irreducible components $R_1,\ldots,R_9$  and  a 
smooth rational 2-section $C$.

An extra  $\tilde{E}_7+\tilde{A}_1$-special Enriques surface with the crystallographic basis of $(-2)$-curves described by 
the following diagram:

\xy (-10,10)*{};(-30,-10)*{};
@={(0,0),(10,0),(20,0),(30,0),(40,0),(50,0),(60,0),(70,0),(80,0), (80,-7),(30,-7)}@@{*{\bullet}};
(0,0)*{};(80,0)*{}**\dir{-};(30,0)*{};(30,-7)*{}**\dir{-};(80,0)*{};(80,-7)*{}**\dir2{-};(70,0)*{};(80,-7)*{}**\dir{-};
(33,-7)*{R_1};(0,3)*{R_2};(10,3)*{R_3};(20,3)*{R_4};(30,3)*{R_5};(40,3)*{R_6};(50,3)*{R_7};(60,3)*{R_8};(70,3)*{C};
(80,3)*{R_9};(84,-7)*{R_{10}};
\endxy

\noindent
or

\xy (-10,10)*{};(-30,-15)*{};
@={(0,0),(10,0),(20,0),(30,0),(40,0),(50,0),(60,0),(70,0),(80,0), (80,-7),(30,-7)}@@{*{\bullet}};
(0,0)*{};(80,0)*{}**\dir{-};(30,0)*{};(30,-7)*{}**\dir{-};(80,0)*{};(80,-7)*{}**\dir2{-};(70,0)*{};
(33,-7)*{R_1};(0,3)*{R_2};(10,3)*{R_3};(20,3)*{R_4};(30,3)*{R_5};(40,3)*{R_6};(50,3)*{R_7};(60,3)*{R_8};(70,3)*{C};(80,3)*{R_9};(84,-7)*{R_{10}};
\endxy

 It has a genus one fibration with a half-fiber of type $\tilde{E}_7$ with irreducible components $R_1,\ldots,R_8$ and a 
fiber or a half-fiber   of type $\tilde{A}_1$ with irreducible components $R_9,R_{10}$. The curve $C$ is a  smooth rational 2-section.

\begin{remark} Note that the first two diagrams coincide with Coxeter diagrams of hyperbolic (in sense of Bourbaki) Coxeter groups of rank 10. 

One may ask why cannot the following  Coxeter diagrams  be realized.

\xy
(-20,10)*{};(-20,-15)*{};
@={(0,0),(5,0),(10,0),(20,0),(25,0),(30,0),(5,5),(5,-5),(25,-5),(25,5)}@@{*{\bullet}};
(15,0)*{\bullet};
(0,0)*{};(30,0)*{}**\dir{-};(5,-5)*{};(5,5)*{}**\dir{-};(25,-5)*{};(25,5)*{}**\dir{-};
%
@={(60,0),(65,0),(70,0),(75,0),(80,0),(90,0),(70,-5),(70,-10),(95,5),(95,-5)}@@{*{\bullet}};
(85,0)*{\bullet};
(95,5)*{};(95,-5)*{}**\dir{-};(60,0)*{};(90,0)*{}**\dir{-};(90,0)*{};(95,5)*{}**\dir{-};(90,0)*{};(95,-5)*{}**\dir{-};(70,0)*{};(70,-10)*{}**\dir{-};
\endxy

In the first diagram one sees a genus 1 fibration with two half-fibers of type $\tilde{D}_4$.  Its jacobian fibration  is a genus 1 fibration on a rational surface.  By the Shioda-Tate formula,  the fibration is quasi-elliptic with the Mordell-Weil group isomorphic to 
$(\bbZ/2\bbZ)^2$. It acts by translation on our surface $S$ leaving the fibers invariant. In particular,  the central curve  and 
the half-fibers are fixed. It is easy to see that each automorphism has  two fixed points on each $(-2)$-curve, 
hence it acts identically on all curves. Since there are two half-fibers, we must have $K_S\ne 0$.  The locus of fixed points is a curve of arithmetic genus 1 contradicting Theorem \ref{thm3.1}.

In the second diagram, the Shioda-Tate formula implies that the Mordell-Weil group of the jacobian fibration is a cyclic group of order 3. It is easy to see, via the symmetry of the  graph, the surface does not admit an automorphism of order 3.

Also note that the group of automorphisms of an extra special Enriques surface is finite. I believe that together with Kond\={o}'s list these are all possible Enriques surfaces with finite automorphism group.
\end{remark}

It follows from the theory of reflection groups that the fundamental polyhedron for the Coxeter group $(G,B)$ in the 9-dimensional 
Lobachevsky  space is of finite volume. Its vertices at infinity correspond to maximal affine subdiagrams and also to 
$G$-orbits of primitive isotropic vectors in $\Num(S)$. The root basis $B$ is a maximal crystallographic basis, so the set of the curves 
$R_i,C$ is equal to the set of all $(-2)$-curves on the surface and the set of nef primitive isotropic vectors in $\Num(S)$ is equal to the set of 
 affine subdiagrams of maximal rank. Thus the number of genus one fibrations on $S$ is finite and coincides with the set of affine 
subdiagrams of rank 8.

It is not known whether an  extra $\tilde{D}_8$-special Enriques surface exists. However,  examples of extra-special surfaces of types $\tilde{E}_8$, or $\tilde{E}_7+\tilde{A}_1$ are given in \cite{Salomonsson}. They are either classical  Enriques surfaces or $\balpha_2$-surfaces. The surfaces are constructed as  separable double covers of a rational surface, so they always admit an automorphism $\sigma$ of order 2.

Suppose that $S$ is an extra $\tilde{E}_8$-special surface. Then we find that the surface has only one genus one fibration. It is
clear that $\sigma$ acts identically on the diagram. This allows one to define a $\sigma$-invariant splitting 
$\Pic(S) \cong \Num(S)\oplus K_S$. It implies that $\sigma$ is cohomologically trivial.

Assume that $S$ is  extra $\tilde{E}_7\oplus \tilde{A}_1$-special surface. The surface has a unique genus one fibration with a half-fiber of type $\tilde{E}_7$. It also has two fibrations in the first case and one fibration in the second case with a fiber of type $\tilde{E}_8$. It implies that the curves 
$R_1,\cdots,R_8,C$ are fixed under $\sigma$. It follows from Salomonsson's construction that  $\sigma(R_9)=R_{10}$ in the first 
case. In the second case, $R_9$ and $R_{10}$ are $\sigma$-invariant on any extra special $\tilde{E}_7\oplus \tilde{A}_1$-surface.

\bibliographystyle{plain}

  \end{document}